\newtheorem{theorem}{Theorem}[section]
\newtheorem{proposition}[theorem]{Proposition}
\newtheorem{corollary}[theorem]{Corollary}
\newtheorem{question}[theorem]{Question}
\theoremstyle{definition}
\newtheorem{definition}{Definition}
\newtheorem{example}[theorem]{Example}
\newtheorem{remark}[theorem]{Remark}
\numberwithin{equation}{section}
\begin{document}
\title[Real gamma distribution on analytic bundles]{Real gamma distribution on analytic bundles of flag varieties}
\author[Haoming Wang]{Haoming Wang}
\address{Center for Combinatorics, Nankai University, Tientsin {\rm300071}, China}
\email{wanghm37@nankai.edu.cn}
\thanks{}
\subjclass[2020]{Primary {62H10}; Secondary {14M15}, {57R22}.}
\keywords{Multivariate distribution, Analytic bundle, Real flag variety}
\date{}
\dedicatory{}

\begin{abstract}
This paper introduces four matrix normal distributions on analytic bundles of flag varieties, extending the separable covariance $\varPhi \otimes \varPsi$ with potentially variable-level ($\varPsi$) and/or sample-level ($\varPhi$) correlations. The joint distribution of sample variances and covariances, leading to the product moment distribution, is considered when precision matrices admit a specific form. Several well-known consequences, including the non-central Wishart distribution and normal quadratic forms, now appear as corollaries.
\end{abstract}

\maketitle

\section{Introduction}

In standard textbooks on probability and statistics, such as \cite{gupta2018matrix} and \cite{mathai2022mul}, the 1-dimensional real gamma distribution is defined as a probability distribution whose density is given by
\[f_{\alpha,\beta}(x) = \frac{\beta^{\alpha}}{\varGamma(\alpha)}x^{\alpha -1} e^{- \beta x}, \quad \alpha>0, \, \beta>0,\]
where $\varGamma(\alpha)$ is known as the gamma function, or the probability mass to the above integral. Equivalently, one can also define $\varGamma(n)$ as Stirling's notion $(n-1)!$ and use the method of analytic continuation to derive the same distribution.

An analogue of the multivariate real gamma distribution involves two parameters $A$ and $B$, where $A$ is a positive real number and $B$ is a $k\times k$ real symmetric positive definite matrix. However, the classic analytic continuation method becomes problematic when the distribution has a non-zero expectation. See, Problem 6.10 in \cite{eaton2007multivariate} and Chapter VII in \cite{faraut1994analysis}. \cite{eaton2007multivariate} conjectured that all possible values of $A$ should lie in the following set on the real half-line
\[\left\{0,\frac{1}{2},1,\frac{3}{2},\dots,\frac{k-1}{2}\right\}\bigcup \left[\frac{k-1}{2},\infty\right).\]
This problem has been resolved by \cite{peddada1991}, \cite{letac2018laplace}, \cite{mayerhofer2019wishart} using different methods, and this set is named after \cite{gindikin1964analysis}. However, these results only concerned a special case of the real gamma distribution on a relatively narrow Grassmannian, studied by \cite{james1974generalized}, \cite{grossandrichards1987trans}, \cite{Shimura1990}, \cite{beerends1993certain}, \cite{Kazhdan1993I-IV}, \cite{johansson1997random,johansson2001discrete}, \cite{grinberg2004radon}, \cite{Postnikov2005affine}, \cite{Bernig2011aom}, \cite{Doubrov2018}, \cite{hedenmalm2021planar}, \cite{Galashin2022flag}, \cite{Kimura2024hyper}, and \cite{kubo2025intertwining}, namely the Wishart distribution.

The real gamma distribution on the affine Grassmannian ${\rm Gr}_{n,k}$ was first studied by \cite{wishart1928generalised}, \cite{ingham1933integral}, \cite{herz1955}. For a detailed discussion with ${\rm Gr}_{n,k}$, see \cite{goodman2009gtm255}, \cite{lakshmibai2015grassmannian,lakshmibai2001flag}, their counterparts in integral geometry, \cite{faraut1994analysis}, \cite{helgason2001differential}, \cite{gel2014integral}, and in multivariate statistics, \cite{muirhead1982aspects}, \cite{mathai2022mul}. In fact, ${\rm Gr}_{n,k}$ is a projective variety under the Zariski topology on $\mathbb{P}^{\binom{n}{k}-1}$ with defining ideal given by Pl\"ucker polynomials
\[\sum_{h=1}^{k+1} (-1)^{h} p_{i_1,\dots,i_{k-1},j_h} p_{j_1,\dots,j_{h-1},\widehat{j_h},j_{h+1},\dots,j_{k+1}},\]
where $p_{\underline{i}}:{\rm Gr}_{n,k} \to \mathbb{P}^{{n \choose k}-1}$ and $\underline{i} = (i_1,i_2,\dots,i_k), 1\leq i_1 < i_2 < \dots < i_k \leq n$ is the determinant representation of ${\rm Gr}_{n,k}$. This variety has dimension $k(n-k)$ and one can show further that it is compact, Tychonoff ($T_0$), irreducible, and generated by a minimal prime ideal. The real gamma distribution arises naturally as the distribution of a sample covariance matrix from independent identically distributed normal vectors. For example, if $x_1,x_2,\dots,x_n \sim_{i.i.d.} N(\mu,\Sigma)$ where $\mu$ is $k\times 1$ and $\Sigma$ is $k\times k$ positive definite, \cite{student1908}, \cite{cochran1946relative}, and \cite{dykstra1970establishing} proved that the sample covariance matrix $S^2$ given by
\[\begin{aligned}
    \frac{n-1}{n} S^2 = \sum_{i=1}^n (x_i - \bar{x})(x_i - \bar{x})',
\quad    n\bar{x} = \sum_{i=1}^{n} x_i,
\end{aligned}\]
follows the real gamma distribution with parameters $\frac{1}{2}(k+1)$ and $\frac{1}{n}\Sigma^{-1}$ on ${\rm Gr}_{n,k}$, the former a positive integer or half-integer which belongs to the Gindikin set. However, its analytic continuation still remains unknown until \cite{letac2018laplace} and \cite{mayerhofer2019wishart} give a complete answer independently.

Recent progress on Grassmannians was mostly motivated by the flags and vector bundles, which provided natural isomorphisms between Stiefel manifolds and Grassmannians. For instance, equip each $k$-dimensional subspace in ${\rm Gr}_{n,k}$ with an orthogonal frame $V \cong \mathbb{R}^k, \mathbb{C}^k,$ or $\mathbb{H}^k$, and we will obtain a local trivialisation on each chart $(U_i,\phi_i)$ via a homeomorphism $\pi$. 
\begin{figure}[htbp]
    \centering
    \begin{tikzcd}[row sep=large, column sep=large] \pi^{-1}(U_i) \arrow[r, "\phi_i"] \arrow[d, "\pi"'] & U_i \times V \arrow[d, "\rm{proj}^1"] \\ U_i \arrow[r, ""] & U_i  \end{tikzcd} \quad \begin{tikzcd}[row sep=large, column sep=large] {\rm St}_{n,k} \arrow[r, "f"] \arrow[d, "\pi"] & {\rm St}_{n,k} \arrow[d, "\sigma"] \\ {\rm Gr}_{n,k} \arrow[r, "g"] & {\rm Flag}{(k;\mathbb R^k)}  \end{tikzcd}
    \label{fig:tikzcd1}
\end{figure}
A chain of increasing subspaces in $V$ is a flag
\[\{0\}=V_{0}\subset V_{1}\subset V_{2}\subset \cdots \subset V_{l}=V,\]
which is uniquely labelled by an integer partition $\lambda = (\lambda_1,\lambda_2,\dots,\lambda_l)$ with $ n\geq \lambda_1>\lambda_2>\dots>\lambda_{l}\geq0$ in this sense, $\dim (V_{i}) = \lambda_{l+1 - i}$. Since the classification of flag varieties is non-trivial, it poses challenges for the generalisation of Student's theorem on them.

Despite this progress, the generalization of such distributions to more general flag varieties and to matrix normal distributions with heterogeneous covariance structures remains challenging. In this paper, we explore statistical distributions on Grassmannians and their flag bundles, aiming to classify double flags via the ampleness of such a matrix normal family and its generalisations,
\begin{equation}
    \begin{aligned}
        g_{n,k}({X})  = \frac{1}{(2\pi)^{\frac{nk}{2}}| \varPhi|^{\frac{k}{2}}| \varPsi|^{\frac{n}{2}}} {\rm etr}\left[ - \frac{1}{2}\varPhi^{-1}({X} -  M) \varPsi^{-1}({X} -  M)^{\prime}\right],
    \end{aligned}
    \label{eq: dawid matrix normal distribution}
\end{equation}
where ${\rm etr}(\cdot) = \exp {\rm tr}(\cdot)$, $ \varPhi$ and $ \varPsi$ are $n\times n$ and $k \times k$ real symmetric positive definite matrices, respectively, and $ M$ is an arbitrary $n \times k$ real matrix. Before that, we study the density of its sample covariance and calculate the characteristic function and distribution of characteristic roots. However, after the matrix transformation $Y= X\varPsi^{-\frac{1}{2}}= (y_1,y_2,\dots,y_k)$, it reduces to the independent identically distributed samples. Thus, instead of considering a common tensor matrix $A = \varPhi^{-1}$ in quadratic forms $y_iAy_j$ ($i\leq j$), we introduce $\frac{1}{2}k(k+1)$ heterogeneous quadratic forms
\begin{equation}
    \begin{aligned}
        {y}_1'{A}_{11}{y}_1, {y}_1'{A}_{12}{y}_2, \dots, {y}_1'{A}_{1k}{y}_k,\\
        {y}_2'{A}_{22}{y}_2, \dots, {y}_2'{A}_{2k}{y}_k, \\
        \vdots\qquad \\
        {y}_k'{A}_{kk}{y}_k,
    \end{aligned}
    \label{eq: illustration of triangular ararry heter}
\end{equation}
where $A_{ij}^{\prime} =  A_{ji}$ represents the inverted covariance matrix between the $i$th and the $j$th variables. This corresponds to four tensor forms $T_{1}$, $T_{1\frac{1}{2}}$, $T_{2}$, and $T_{3}$ of the population precision matrix that will be seen in align with the elliptical contoured notations introduced by \cite{1990Generalized}. As pointed out by them, the problem for the determination of spectral decomposition remains an open question.

\section{Analytic bundles of flag varieties}

Assume the readers are familiar with basic combinatorial notions, like integer partitions, Young tableaux, skew tableaux, and Schur-Weyl duality, etc. For an overview of these topics, see \cite{fulton1997young}, \cite{macdonald1998symmetric}, \cite{goodman2009gtm255}. We often identify the irreducible representations of the general linear group $GL_k$ and the symmetric group $S_k$ via the Schur-Weyl duality, whose proof can be found above. Let $V = \mathbb K^n,$ where $ \mathbb K = \mathbb R, \mathbb C, \mathbb H.$

\begin{proposition} ${\rm End}_{S_k}(V^{\otimes k}) \cong \mathbb{C}[GL(V)]$ and ${\rm End}_{GL(V)}(V^{\otimes k}) \cong \mathbb{C}[S_k]$. 
\end{proposition}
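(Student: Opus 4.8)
The plan is to prove both isomorphisms at once through the double centralizer (bicommutant) theorem, reading $\mathbb{C}[GL(V)]$ and $\mathbb{C}[S_k]$ as the images $B$ and $A$ of the two group algebras inside $\operatorname{End}(V^{\otimes k})$. Here $S_k$ acts by permuting tensor slots and $GL(V)$ acts diagonally through $g \mapsto g^{\otimes k}$; since $(g^{\otimes k})\sigma = \sigma(g^{\otimes k})$, the two actions commute, so $A \subseteq \operatorname{End}_{GL(V)}(V^{\otimes k})$ and $B \subseteq \operatorname{End}_{S_k}(V^{\otimes k})$, and both $A$, $B$ are actually subalgebras (for instance $g^{\otimes k}h^{\otimes k} = (gh)^{\otimes k}$). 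The scheme rests on one observation: $\mathbb{C}[S_k]$ is semisimple by Maschke's theorem (characteristic $0$), hence so is its quotient $A$, and the double centralizer theorem then yields $\operatorname{End}_{\operatorname{End}_A(V^{\otimes k})}(V^{\otimes k}) = A$. Because the commutant of $A$ is $\operatorname{End}_{S_k}(V^{\otimes k})$ and the commutant of $B$ equals $\operatorname{End}_{GL(V)}(V^{\otimes k})$, it suffices to establish a single equality, say $B = \operatorname{End}_{S_k}(V^{\otimes k})$; the second statement then falls out of bicommutation.

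The key step, which I would carry out first, is to pin down the $S_k$-commutant explicitly. Using $\operatorname{End}(V^{\otimes k}) \cong \operatorname{End}(V)^{\otimes k}$ (legitimate since $\dim V < \infty$), conjugation by $\sigma \in S_k$ becomes permutation of the $k$ factors of $\operatorname{End}(V)^{\otimes k}$, so that
\[ \operatorname{End}_{S_k}(V^{\otimes k}) = \big(\operatorname{End}(V)^{\otimes k}\big)^{S_k} = \operatorname{Sym}^k\!\big(\operatorname{End}(V)\big). \]
It then remains to prove $B = \operatorname{Sym}^k(\operatorname{End}(V))$. As $GL(V)$ is Zariski-dense in the affine space $\operatorname{End}(V)$ and $X \mapsto X^{\otimes k}$ is polynomial, $B = \operatorname{span}\{g^{\otimes k} : g \in GL(V)\} = \operatorname{span}\{X^{\otimes k} : X \in \operatorname{End}(V)\}$. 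A polarization identity in characteristic $0$,
\[ k!\,\operatorname{Sym}(X_1 \otimes \cdots \otimes X_k) = \sum_{\emptyset \neq S \subseteq \{1,\dots,k\}} (-1)^{k - |S|}\Big(\sum_{i \in S} X_i\Big)^{\otimes k}, \]
expresses every symmetrized elementary tensor as a combination of pure $k$-th powers, giving $\operatorname{Sym}^k(\operatorname{End}(V)) \subseteq B$; the reverse inclusion is immediate since each $X^{\otimes k}$ is symmetric. This settles the first isomorphism.

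With $B = \operatorname{End}_{S_k}(V^{\otimes k})$ secured, I would finish by feeding it back into the double centralizer theorem: $\operatorname{End}_{GL(V)}(V^{\otimes k}) = \operatorname{End}_B(V^{\otimes k}) = \operatorname{End}_{\operatorname{End}_A(V^{\otimes k})}(V^{\otimes k}) = A$, the equality of commutants holding because $B$ and its generating operators $g^{\otimes k}$ share the same centralizer; $A$ is the image of $\mathbb{C}[S_k]$, which is faithful (hence genuinely $\cong \mathbb{C}[S_k]$) once $\dim V \geq k$. The hard part of the whole plan is the key step, namely checking that the diagonal image of $GL(V)$ \emph{already} linearly spans all of $\operatorname{Sym}^k(\operatorname{End}(V))$: this is exactly where Zariski density and characteristic-$0$ polarization are indispensable, and the only place where the argument does more than formal bookkeeping. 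A secondary point to dispatch is that the statement carries $\mathbb{C}$-coefficients while $V = \mathbb{K}^n$ for $\mathbb{K} \in \{\mathbb{R}, \mathbb{C}, \mathbb{H}\}$; for $\mathbb{K} = \mathbb{R}$ or $\mathbb{H}$ I would first complexify, reducing to the $\mathbb{C}$ case where the above runs verbatim, while noting that the finer \emph{irreducible} decomposition of $V^{\otimes k}$ (as opposed to the commutant identity proved here) is more delicate over those fields.
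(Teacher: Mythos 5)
Your proposal is correct, but note that the paper itself contains no proof of this proposition at all: it is stated as a known fact (Schur--Weyl duality), with the reader deferred to the cited references (\cite{fulton1997young}, \cite{macdonald1998symmetric}, \cite{goodman2009gtm255}). What you have written is essentially the standard double-centralizer proof that appears in those references (notably Goodman--Wallach): identify $\operatorname{End}_{S_k}(V^{\otimes k})$ with the symmetric tensors $\bigl(\operatorname{End}(V)^{\otimes k}\bigr)^{S_k}$, show these are spanned by the operators $g^{\otimes k}$ via Zariski density of $GL(V)$ in $\operatorname{End}(V)$ together with characteristic-zero polarization, and then recover the second commutant identity from semisimplicity of $\mathbb{C}[S_k]$ (Maschke) and the bicommutant theorem. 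I checked the two nontrivial ingredients: your polarization identity is correct (inclusion--exclusion kills every non-bijective index map, leaving exactly $\sum_{\sigma} X_{\sigma(1)}\otimes\cdots\otimes X_{\sigma(k)}$), and the density argument is sound since a linear subspace is Zariski-closed and $X \mapsto X^{\otimes k}$ is polynomial. Moreover, your reading of the statement quietly repairs two imprecisions in the paper's formulation: $\mathbb{C}[GL(V)]$ must be interpreted as the image of the group algebra in $\operatorname{End}(V^{\otimes k})$ (the abstract group algebra of the infinite group $GL(V)$ is infinite-dimensional), and $\operatorname{End}_{GL(V)}(V^{\otimes k}) \cong \mathbb{C}[S_k]$ requires $\dim V \geq k$, since otherwise the $S_k$-action has a kernel (e.g.\ the antisymmetrizer acts as zero); your flagging of the complexification step needed for $\mathbb{K} = \mathbb{R}, \mathbb{H}$ is likewise appropriate, as the paper states the result over $\mathbb{C}$ while allowing real and quaternionic $V$. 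In short: the proposal is a complete and correct proof of a statement the paper only cites, and it follows the same route as the standard literature the paper points to.
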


An analytic bundle is a vector bundle such that the projection mapping from the bundle to the base is analytic. The following three examples deal with the real case, $\mathbb K = \mathbb R$.

\begin{example}
Consider the Stiefel manifold $\mathrm{St}_{n,k}$ of orthonormal $k$-frames in $\mathbb{R}^n$ and the Grassmann manifold $\mathrm{Gr}_{n,k}$ of $k$-dimensional subspaces. There is a natural projection 
\[
\mathbb{R}^k \to \mathrm{St}_{n,k} \overset{\pi}{\to} \mathrm{Gr}_{n,k}
\]
sending a $k$-frame to the subspace it spans. This is an analytic fiber bundle whose fiber over each point is isomorphic to the orthogonal group $O_{k}$. Specifically, for a fixed $k$-dimensional subspace $V \in \mathrm{Gr}_{n,k}$, any orthonormal basis of $V$ is related to any other by an orthogonal transformation, so the fiber $\pi^{-1}(V)$ is exactly the set of orthonormal bases of $V$, which is a tensor for $O_{k}$. Therefore, $\mathrm{St}_{n,k}$ is a principal $O_{k}$-bundle over $\mathrm{Gr}_{n,k}$, and the projection $\pi$ is analytic.
\end{example}

\begin{example}
Let $\lambda = (\lambda_1, \lambda_2, \dots, \lambda_l)$ be an integer partition with $n \geq \lambda_1 > \lambda_2 > \cdots > \lambda_l \geq 0$. The flag variety $\mathrm{Flag}(\lambda; \mathbb{R}^n)$ consists of all flags 
$0 \subset V_1 \subset V_2 \subset \cdots \subset V_l \subset \mathbb{R}^n$ such that $\dim V_i = \lambda_{l+1-i}$. The orthogonal group $O_n$ acts transitively on $\mathrm{Flag}(\lambda; \mathbb{R}^n)$, and the stabilizer of a fixed flag 
is the block-diagonal subgroup $O_{\lambda}$,
\[ \begin{bNiceMatrix}
		  O_{\lambda_1 - \lambda_2} & & & \dots & O\\
		& O_{\lambda_2 - \lambda_3} & & \\
        & & \ddots &  & \vdots\\
		\vdots & & & O_{\lambda_{l-1} - \lambda_l} & \\
		O & \dots & & &  O_{\lambda_l}
		\CodeAfter
		\tikz \draw[dashed] (1-|2) |- (2-|3) |- (3-|4) |- (4-|5) |- (5-|6);
		\tikz \draw[dashed] (2-|1) -| (3-|2) -| (4-|3) -| (5-|4) -| (6-|5);
	\end{bNiceMatrix}.\]
This gives rise to an analytic fiber bundle, expressed as
\[
O_\lambda \longrightarrow O_n \overset{\pi_\lambda}{\longrightarrow} \mathrm{Flag}(\lambda; \mathbb{R}^n),
\]
where $\pi_\lambda$ is the natural analytic projection. In particular, $O_n$ is an analytic principal $O_\lambda$-bundle over 
$\mathrm{Flag}(\lambda; \mathbb{R}^n)$. When $\lambda = (n-k, k)$, the flag variety reduces to the Grassmannian $\mathrm{Gr}_{n,k}$, and we recover the familiar bundle 
$O_k \times O_{n-k} \to O_n \to \mathrm{Gr}_{n,k}$. The Stiefel manifold $\mathrm{St}_{n,k}$ then arises as 
the associated bundle $O_n \times_{G} \mathbb{R}^{n-k}$, where $G=O_{n-k}$ acts trivially on $\mathbb{R}^{n-k}$.
\end{example}

The following example is important in our main Theorem \ref{thm: flag}.
\begin{example}
Let $A \in \mathbb{R}^{n \times k}$ be a matrix of full column rank ${\rm rank}(A) = k \le n$. Consider the graph of $A$
\[
\Gamma_A = \{ (x, y) \in \mathbb{R}^k \oplus \mathbb{R}^n \mid y = Ax \},
\]
which is an $k$-dimensional subspace of $\mathbb{R}^{n+k}$ and therefore corresponds to a point in the Grassmannian ${\rm Gr}_{n+k,k}$. Let $\{v_1, \dots, v_k\}$ be an orthonormal basis of $\mathbb{R}^k$ satisfying $A'Av_j = \sigma^2_j v_j$. This basis defines ${\rm Flag}(\mathbb R^k)$, a complete flag $0 \subset \langle v_1 \rangle \subset \langle v_1, v_2 \rangle \subset \cdots \subset \mathbb{R}^k$. Since the projection $\pi_1: \Gamma_A \to \mathbb{R}^k$, $(x, Ax) \mapsto x$ is an isomorphism, we can lift ${\rm Flag}(\mathbb R^k)$ to $\Gamma_A$ by defining
\[
\Gamma_j = \{ (x, Ax) \mid x \in \langle v_1, \dots, v_j \rangle \} \subset \Gamma_A, \qquad j = 1, \dots, k.
\]
Then $\dim \Gamma_j = j$ and $\Gamma_j \subset \Gamma_{j+1}$, yielding a complete flag on $\Gamma_A$,
\[
0 \subset \Gamma_1 \subset \Gamma_2 \subset \cdots \subset \Gamma_k = \Gamma_A.
\]
This yields an embedding
\[
\iota: \mathrm{Flag}(\mathbb{R}^k) \rightarrow \mathrm{Flag}(1,2,\dots,k; \mathbb{R}^{n+k}),
\]
which sends the right flag of $A$ to a flag on its graph. The construction is analytic and depends only on the right flag since the left flag is not encoded in the image.
\end{example}

\begin{proposition}
    Let $V$ a real analytic compact connected projective variety irreducible in the affine sense. Let $\lambda \vdash k$ be an integer partition and $n \ge k$. Then the following four objects are equivalent descriptions for $V$,
    $$\mathrm{Flag}(\lambda; \mathbb{R}^n), \quad O_n/O_\lambda, \quad {\rm St}_{n,k}/O_\lambda, \quad  SO_n/O_\lambda.$$
\end{proposition}
Its proof can be found in standard textbooks, for instance, \cite{fulton1997young,lakshmibai2001flag,goodman2009gtm255}.

\section{Hypergeometric function}

The hypergeometric function in one variable is defined as the power series 
\[{}_{p}F_{q}\left(\begin{matrix}
    	    \underline{a}; \underline{b}\end{matrix}; z\right) = \sum _{n=0}^{\infty }{\frac {(a_1)^{n}(a_2)^{n}\cdots(a_p)^n}{(b_1)^{n}(b_2)^{n}\cdots(b_q)^n}}{\frac {z^{n}}{n!}},\]
where $\underline{a} = (a_1,a_2,\dots,a_p)$, $\underline{b} = (b_1,b_2,\dots,b_q)$, and $(q)^n =q(q+1)\cdots (q+n-1)$ denotes the rising Pochhammer symbol. For example, the Gauss hypergeometric function ${}_{2}F_{1}(a,b;c;z)$ terminates if either $a$ or $b$ is a non-positive integer, in which case the function reduces to a polynomial
\[{}_{2}F_{1}(-m,b;c;z)=\sum _{n=0}^{m}(-1)^{n}{\binom {m}{n}}{\frac {(b)^{n}}{(c)^{n}}}z^{n}.\]
The hypergeometric function has many nice properties, which tie closely to other mathematical branches, like complex analysis, partial differential equations, group theory, and combinatorics etc. First, it can also be defined as a solution to the Laplace-Beltrami equation, which subjects to a second-order elliptic differential operator. This is a famous recursive relation
    \begin{eqnarray*}
            \lim\limits_{c\to \infty}{}_{p+1}F_{q} (\underline{a},c;\underline{b};c^{-1}z) = {}_{p}F_{q} (\underline{a};\underline{b};z), \\
        \lim\limits_{c\to \infty}{}_{p}F_{q+1}(\underline{a};\underline{b},c;cz)  = {}_{p}F_{q} (\underline{a};\underline{b};z).
    \end{eqnarray*}

In the multivariate case, these hypergeometric functions are generalised as the series of Gegenbauer, or ultraspherical polynomials. The related representation had been settled in the last century, for example, \cite{szego1954singularities}, \cite{Nehari1956OnTS}, \cite{stein1971}, \cite{Zayed1980OnTS}, \cite{ebenfelt1998}, while for the hypergeometric function with matrix arguments, it is defined in terms of the eigenvalues of a Hermite matrix. \cite{herz1955} might been seen as the first to use this definition by calculating a Laplace transformation over orthogonal groups and its inverse. \cite{constantine1963some,constantine1966hotelling} gave the zonal polynomials as a basis for these hypergeometric functions. \cite{Vilenkin2010RepresentationOL} generalised these results to a family of Jack polynomials, but for general Macdonald polynomials or Koornwinder polynomials, this field still lacks exploration.

In this paper, we do not go through the representation part of hypergeometric functions. Instead, we will focus on the related formulas of gamma and beta integrals that will be given as follows. The multivariate Gamma function, denoted by $\varGamma_{p}(a)$, is defined to be
\begin{equation}
    \varGamma_{n}(a) = \int_{A>0} \operatorname{etr} (-A) |A|^{a-\frac{n+1}{2}} (dA),
\end{equation}
where $\Re (a) > \frac{1}{2}(n-1)$ and $A>0$ means the integral is taken over the space of real symmetric positive definite $n \times n$ matrices. The multivariate Beta function, denoted by $B_{n}(a,b)$, is defined to be
\begin{equation}
    B_{n}(a,b) =\int_{0< X < I} |X|^{a - \frac{n+1}{2}} |I-X|^{b - \frac{n+1}{2}} (dX),
\end{equation}
where $\Re (a), \Re (b) > \frac{1}{2}(n-1)$, and the integral is taken over all $n\times n$ real symmetric matrices $X$ such that both $X$ and $I - X$ are positive definite. The multivariate Beta function is related to the multivariate Gamma function by the formula 
\begin{equation}
    B_{n}(a,b) = \frac{\varGamma_{n}(a)\varGamma_{n}(b)}{\varGamma_{n}(a + b)}.
\end{equation}

It's notable to mention that the hypergeometric function of a matrix argument has a similar recursive relation due to \cite{herz1955}. We are only interested in the gamma and beta integrals, whose proof are given by \cite{constantine1963some}.
	\begin{eqnarray}
	    \begin{aligned}\int_{X>0} \operatorname{etr}(-XT) |X|^{c - \frac{n+1}{2}} &{}_{p}F_{q} (\underline{a};\underline{b}; X) (dX)\\		= \varGamma_{n}(c) |T|^{-a} &{}_{p+1}F_{q} (\underline{a},c;\underline{b}; T^{-1}) 	\end{aligned}\\
	    \begin{aligned}
    	\int_{0< X < I} |X|^{c - \frac{n+1}{2}} |I-X|^{d - \frac{n+1}{2}} {}_{p}F_{q} \left(\underline{a};\underline{b}; TX\right) (dX)\\
		= {B_{n} (d-c,c)}^{-1}{}_{p+1}F_{q+1} \left(c,\underline{a};d,\underline{b}; T\right),\\
	\end{aligned}
        \label{eq: hypergeometric function1}
	\end{eqnarray}
where $\Re (d-c), \Re (c) > \frac{1}{2}(n-1)$ and $T$ is an $n \times n$ real symmetric positive definite matrix. Besides, the hypergeometric function of two matrix arguments is defined by the zonal polynomial, a topic that can be found in \cite{hua1963harmonic}, \cite{muirhead1982aspects}, \cite{Vilenkin2010RepresentationOL}, \cite{macdonald1998symmetric}, \cite{shimizu2022}, \cite{wang2025matrix}. We are not going to give its definition here. However, when two matrix arguments are of the same size, we have 
\begin{equation}
    \int_{O_n} {}_{p}F_{q}\left(\underline{a};\underline{b}; X H Y H^{\prime}\right) [dH] = {}_{p}F_{q}\left(\underline{a};\underline{b}; X, Y\right),
\end{equation}
where $ X$ and $ Y$ are both $n \times n$ symmetric matrices, the integral is taken with respect to the normalised Haar measure $[dH]$ over the space of $n\times n$ orthogonal matrices $O_n$. 

If two matrices are of unequal size, a similar formula can be found in the following lemma, whose proof is standard and based on the property of zonal polynomials for block matrices, as in \cite{SHIMIZU2021104714}. Let $n,k$ be positive integers and $n\geq k$. 

\begin{proposition}\label{lem: hypergeometric} Suppose $ A$ is an $n\times n$ and $ B$ is a $k\times k$ symmetric matrix. 
    \[\begin{aligned}
        \int_{O_n}{}_{0}F_{0}( A H_1 B H_1^{\prime}) [dH] = {}_{0}F_{0}( A, B),\\ 
         H = (H_{1},  H_{2}), \text{ where }  H_1 \text{ is } n \times k,
    \end{aligned}\]
    that is, the integral runs over the orthogonal group $O_n$.
\end{proposition}

The following lemma by \cite{james1961zonal} will be used to derive the non-central distribution.
\begin{proposition}\label{lem: james55} Suppose $X$ is an $n\times k$ matrix.
    \[\begin{aligned}
        \int_{O_n} \operatorname{etr}( X H_1^{\prime}) (d H) = {}_{0}F_{1}\left(\frac{1}{2}n;\frac{1}{4} X' X\right), \\ 
         H = (H_{1},  H_{2}), \text{ where }  H_1 \text{ is } n \times k,
        \end{aligned}\]
    and the integral runs over the orthogonal group $O_n$ similar to Lemma \ref{lem: hypergeometric}.
\end{proposition}

The following lemma due to \cite{khatri1966} will be used to prove Theorem \ref{thm: main theorem}.
	\begin{proposition} Let $ A$ be an $n \times n$ symmetric matrix and $ B$ an $k \times k$ symmetric positive definite matrix with $n \geq k$. Let $ X$ be an $n \times k$ matrix. Then
		\begin{equation*}
			\int_{ X^{\prime}  X= S} \operatorname{etr} ( A  X  B  X^{\prime}) (d  X)= \frac{\pi^{\frac{nk}{2}}}{\varGamma_{p}(\frac{n}{2} )} | S|^{\frac{n-k-1}{2}} {_0}F_0( A,  B S).\label{eq: Khatri1966}
		\end{equation*}\label{lem: Khatri66}
	\end{proposition}

The incomplete Gamma and Beta integrals involving hypergeometric functions are given by \cite{constantine1963some} and \cite{Davis1979InvariantPW,Davis1981OnTC}. We will use them to derive the extreme eigenvalue distribution.
\begin{proposition}
\label{lem: davis} Let $R$ be an $p \times p$ real symmetric positive definite matrix. 
\begin{equation}
\begin{aligned}
\int_{0}^{ R} \operatorname{etr}(- A S)| S|^{c-\frac{p+1}{2}} & {}_{p}F_{q}\left(\underline{a};\underline{b}; B  S\right) d  S \\
                = \frac{| R|^a }{{B}_{p}(c,\frac{p+1}{2})}
                & {}_{p+1}F_{q+1} \left(
            c,\underline{a};c + \tfrac{1}{2}(p+1),\underline{b}; -  A  R, B  R\right).
\end{aligned}
\end{equation}
\begin{equation}
        \begin{aligned}
        \int_{0}^{ R} | S|^{c-\frac{p+1}{2}} | I -  S|^{d - \frac{p+1}{2}} & {}_{p}F_{q}\left(\underline{a};\underline{b}; A  S\right) d  S \\ = \frac{| R|^a }{{ B}_{p}\left(c,\frac{p+1}{2}\right)} {}_{p+2}F_{q+1} & \left(
            c, - d + \tfrac{1}{2}(p+1), \underline{a}; c + \tfrac{1}{2}(p+1), \underline{b};  A,  A  R\right).
    \end{aligned}
\end{equation}
\end{proposition}

Last but not least, we will introduce an important matrix decomposition. Assume that $Z$ is a $n \times k$ real matrix with full column rank. Thus, $Z$ can be uniquely decomposed into $Z = HR^{\frac{1}{2}},$ where $H$ is an $n\times k$ matrix satisfying $H'H = I_k$ and $R$ is a $k\times k$ symmetric positive definite matrix. In particular, we can choose $H = Z(Z'Z)^{-\frac{1}{2}}$ and $R = Z'Z$.

The set of $n\times k$ matrices $H$ satisfying $H'H = I_k$ is said to be a Stiefel manifold, denoted by ${\rm St}_{n,k}$. For any such $H\in {\rm St}_{n,k}$, we can always expand it into an orthogonal matrix $K = (H,H_\perp) \in O_n$, where the column vectors of $H_\perp \in {\rm St}_{n,n-k}$ are orthogonal to the column vectors of $H\in V_{n,k}$. \cite[Equation 8.19]{james1954} was the first to prove that
    \begin{equation*} \bigwedge_{i,j=1}^n dz_{ij} = 2^{-k} |Z'Z|^{\frac{1}{2}(n-k-1)} \bigwedge_{i\le j}^n d(z_i'z_j) \bigwedge_{i< j}^n h_i'dh_j \bigwedge_{i=k+1}^n \bigwedge_{j=1}^k h_i'dh_j, \label{eq: james1954}\end{equation*}
    where $H = (h_1,\dots,h_k), H_\perp = (h_{k+1},\dots,h_n)$. In matrix form,
\begin{proposition}\label{prop: herz55}
$(dZ) = 2^{-k} |R|^{\frac{1}{2}(n-k-1)} \cdot (dR) \cdot (dK)$, where $(dK) = {(H'dH)} \cdot {(H_\perp'dH)}$.
\end{proposition}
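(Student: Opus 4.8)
The statement is the matrix-notation repackaging of the coordinate identity of \cite{james1954} quoted just above, under the dictionary $\bigwedge_{i\le j}d(z_i'z_j)=(dR)$, $\bigwedge_{i<j}h_i'dh_j=(H'dH)$, and $\bigwedge_{i=k+1}^{n}\bigwedge_{j=1}^{k}h_i'dh_j=(H_\perp'dH)$, so one route is simply to regroup the factors of that formula. I would instead give the self-contained Jacobian derivation. The plan is to complete $H\in{\rm St}_{n,k}$ to an orthogonal matrix $K=(H,H_\perp)\in O_n$ and exploit that left multiplication by $K$ scales the volume form by $(\det K)^{k}=\pm1$, so that $(dZ)=\bigl|\bigwedge_{i,j}(K'dZ)_{ij}\bigr|$. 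Differentiating $Z=HR^{1/2}$ and using $H'H=I_k$, $H_\perp'H=0$ gives the block form
\[ K'dZ=\begin{pmatrix}(H'dH)R^{1/2}+d(R^{1/2})\\[2pt](H_\perp'dH)R^{1/2}\end{pmatrix}, \]
whose top $k\times k$ block I call $M_1$ and bottom $(n-k)\times k$ block $M_2$.

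The bottom block is easy: since $M_2$ is the $(n-k)\times k$ matrix of $1$-forms $H_\perp'dH$ right-multiplied by $R^{1/2}$, each of its $n-k$ rows contributes a Jacobian $\det R^{1/2}$, so $\bigl|\bigwedge M_2\bigr|=|R|^{(n-k)/2}(H_\perp'dH)$. The hard part will be the top block $M_1=(H'dH)R^{1/2}+d(R^{1/2})$, where the skew-symmetric part $H'dH$ (the $\binom{k}{2}$ Stiefel directions) and the symmetric part $d(R^{1/2})$ (the $\binom{k+1}{2}$ directions along the fibre) are entangled by the multiplication by $R^{1/2}$.

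To untangle it I would use $O_k$-equivariance of all three volume forms to evaluate the Jacobian at a point where $R^{1/2}=\Lambda=\diag(\gamma_1,\dots,\gamma_k)$ is diagonal. Writing $S=H'dH$ (skew) and $T=d(R^{1/2})$ (symmetric), the paired entries become $(M_1)_{ij}=\gamma_j S_{ij}+T_{ij}$ and $(M_1)_{ji}=-\gamma_i S_{ij}+T_{ij}$ for $i<j$, while $(M_1)_{ii}=T_{ii}$; taking the $2\times2$ determinant of each pair produces the factor $\prod_{i<j}(\gamma_i+\gamma_j)$, so that $\bigl|\bigwedge M_1\bigr|=\prod_{i<j}(\gamma_i+\gamma_j)\,(H'dH)\bigwedge_{i\le j}d(R^{1/2})_{ij}$. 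Finally I convert $d(R^{1/2})$ to $dR$: from $R=(R^{1/2})^2$ one has $dR_{ij}=(\gamma_i+\gamma_j)\,d(R^{1/2})_{ij}$ at $\Lambda$, whence $(dR)=2^{k}|R|^{1/2}\prod_{i<j}(\gamma_i+\gamma_j)\bigwedge_{i\le j}d(R^{1/2})_{ij}$. The two occurrences of $\prod_{i<j}(\gamma_i+\gamma_j)$ cancel, leaving $\bigl|\bigwedge M_1\bigr|=2^{-k}|R|^{-1/2}(dR)(H'dH)$.

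Multiplying the two blocks gives $(dZ)=2^{-k}|R|^{-1/2}\,|R|^{(n-k)/2}(dR)(H'dH)(H_\perp'dH)=2^{-k}|R|^{(n-k-1)/2}(dR)(dK)$, as claimed. The decisive step is the cancellation of the eigenvalue products $\prod_{i<j}(\gamma_i+\gamma_j)$ between the top-block Jacobian and the $R^{1/2}\mapsto R$ Jacobian; I expect verifying the equivariance that legitimises evaluating at diagonal $\Lambda$, together with the bookkeeping of signs and orderings in the wedge, to be the only genuinely delicate points.
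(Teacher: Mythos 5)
Your derivation is correct, but it is not the paper's route: the paper gives no proof at all, merely citing James's (1954) coordinate identity (Equation 8.19, reproduced immediately above the proposition) and regrouping its factors into matrix notation --- exactly the ``dictionary'' you describe and set aside in your first sentence. What you supply instead is a genuine, self-contained Jacobian computation via the polar decomposition $Z=HR^{1/2}$: splitting $K'dZ$ into the top block $M_1=(H'dH)R^{1/2}+d(R^{1/2})$ and the bottom block $M_2=(H_\perp'dH)R^{1/2}$, getting $|R|^{(n-k)/2}$ from the bottom block row by row, and untangling the top block by $O_k$-equivariance at a diagonal point $R^{1/2}=\Lambda$, where each pair $\bigl((M_1)_{ij},(M_1)_{ji}\bigr)$ contributes $\gamma_i+\gamma_j$ while the change of variables $d(R^{1/2})\mapsto dR$ contributes $2^{k}|R|^{1/2}\prod_{i<j}(\gamma_i+\gamma_j)$, so the eigenvalue products cancel. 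All steps check: conjugation acts orthogonally on the skew, symmetric, and full matrix spaces, so reducing to diagonal $\Lambda$ is legitimate; the $2\times 2$ determinant $\det\begin{pmatrix}\gamma_j&1\\-\gamma_i&1\end{pmatrix}=\gamma_i+\gamma_j$ is right; and the final bookkeeping $2^{-k}|R|^{-1/2}\cdot|R|^{(n-k)/2}=2^{-k}|R|^{(n-k-1)/2}$ yields the stated constant and exponent (signs are absorbed, as usual, by treating the volume elements as unoriented). What each approach buys: the paper's citation is instantaneous but leaves the factor $2^{-k}$ and the exponent $\frac{1}{2}(n-k-1)$ unverified for the reader, whereas your argument establishes them from scratch; it is essentially the classical polar-decomposition proof going back to James (1954) and Herz (1955), and it is worth knowing the standard variant (Muirhead, Theorem 2.1.14) that uses the triangular decomposition $Z=H_1T$ instead, which avoids your equivariance-and-cancellation step because both intermediate Jacobians are then triangular with explicit diagonal entries $t_{ii}^{n-i}$ and $t_{ii}^{k+1-i}$.
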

The geometric meaning of $(dK)$ is, \({(H'dH)} \) describes the rotation of the column vectors of the object \( H \) within the \(k \)-dimensional subspace it spans, thus corresponding to the volume element of $O_k$, and \( {(H_\perp'dH)} \) describes the change of direction of the \(k\)-dimensional subspace in the \( n \)-dimensional space, that is, the movement of the subspace in \({\rm Gr}_{n,k} \). The two together describe the Stiefel manifold \( {\rm St}_{n,p} \), which is a principal bundle on the Grassmannian ${\rm Gr}_{n,k}$ with fibre \(O_k\).

\section{Matrix normal distribution}



Let $M$ be a complete Riemannian manifold that can be embedded into the space of rectangular matrices $M_{n,k}\cong \mathbb{R}^{n\times k}$ for appropriate $n,k$. The set of all probability measures on $M$, denoted as $\mathcal{P}(M)$, is a 
topological space under weak convergence, on which a topological basis is given by 
\[U_{\underline{f};\varepsilon}(\mu) = \bigcap_{i=1}^m\left\{\nu \in \mathcal{P}(M):|\mu-\nu|(f_i)< \varepsilon\right\},\\ f_{1},\dots,f_{m} \in C_b(M), \varepsilon\geq 0.\]
In fact, we have the proposition whose proof can be found in \cite[Section 5]{billingsley2013convergence} and \cite[Theorem 17.23]{kechris2012classical}, 
\begin{proposition}
    $\mathcal{P}(M)$ is a complete separable metric space, and among which are absolutely continuous with respect to the Lebesgue measures on $M_{n,k} $ is a closed subspace of $\mathcal{P}(M)$. 
    Let us denote this set as $\mathcal{P}_{0}(M)$. In addition, the convolution of two measures in $\mathcal{P}_0(M)$ turns it into a Polish group.
\end{proposition}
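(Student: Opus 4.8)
The statement bundles three independent assertions, and the plan is to dispatch them in the order given while keeping careful track of which topology is in force, since that is where the genuine difficulty lies. For the first assertion, that $\mathcal{P}(M)$ is a complete separable metric space, I would work with the \emph{weak} topology and invoke the classical Prokhorov theory. Because $M$ sits as a complete submanifold of $\mathbb{R}^{n\times k}$, it is Polish, and $\mathcal{P}(M)$ is then metrized by the Lévy--Prokhorov metric
\[
d_P(\mu,\nu) = \inf\{\varepsilon>0 : \mu(A)\le \nu(A^{\varepsilon})+\varepsilon \text{ for every Borel } A\subset M\}.
\]
Separability follows by approximating an arbitrary measure by finitely supported measures with rational weights on a countable dense subset of $M$, and completeness follows from Prokhorov's theorem: a $d_P$-Cauchy sequence is uniformly tight, hence relatively compact, hence convergent. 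This is precisely the content of the cited Billingsley and Kechris references, so I regard this step as routine.

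The second assertion, closedness of $\mathcal{P}_0(M)$, is the crux and is where the choice of topology must be confronted head-on. Under the weak topology the absolutely continuous measures are \emph{not} closed: densities concentrating at a point converge weakly to a Dirac mass, which has no density. The assertion becomes correct only in the \emph{total-variation} topology --- indeed the displayed neighbourhood basis $U_{\underline f;\varepsilon}$, written through the total-variation measure $|\mu-\nu|$, is really a basis for the total-variation metric rather than for weak convergence. In that topology closedness is immediate: if $\mu_j\to\mu$ in total variation with each $\mu_j\ll\mathrm{vol}$, then for any $\mathrm{vol}$-null set $A$ one has $|\mu(A)| = |\mu(A)-\mu_j(A)| \le \lVert \mu-\mu_j\rVert_{\mathrm{TV}}\to 0$, so $\mu(A)=0$ and $\mu\ll\mathrm{vol}$. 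I would then identify $\mathcal{P}_0(M)$ with the set of densities
\[
\{\, f\in L^1(M,\mathrm{vol}) : f\ge 0,\ \textstyle\int_M f\,d\mathrm{vol}=1 \,\},
\]
a closed convex subset of $L^1$; separability and completeness of $L^1$ then make $\mathcal{P}_0(M)$ Polish in its own intrinsic metric, which is the cleanest route to the Polishness claimed.

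For the third assertion I would realize convolution through the additive structure of the ambient $\mathbb{R}^{n\times k}$, setting $\mu*\nu$ equal to the pushforward of $\mu\otimes\nu$ under $(x,y)\mapsto x+y$. On $\mathcal{P}_0(M)$ this is the ordinary convolution of densities $f*g$, which is again a nonnegative $L^1$ function of integral one, so $\mathcal{P}_0(M)$ is stable under $*$; Young's inequality $\lVert f*g\rVert_1\le \lVert f\rVert_1\lVert g\rVert_1$ together with bilinearity gives joint continuity of $*$ in the total-variation metric, so $(\mathcal{P}_0(M),*)$ is at least an abelian topological semigroup.

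The main obstacle, and the point I would flag as needing the most care, is twofold. First, the two topologies cannot be reconciled on the nose: for uncountable $M$ the total-variation topology on all of $\mathcal{P}(M)$ is non-separable (the point masses $\delta_x$ are pairwise at distance $2$), so the separability in the first assertion lives in the weak topology while the closedness in the second lives in total variation; the honest formulation keeps $\mathcal{P}_0(M)$ Polish in its own $L^1$ metric rather than as a weakly closed subspace. Second, $(\mathcal{P}_0(M),*)$ is not literally a \emph{group}: the convolution identity $\delta_0$ is singular and hence excluded from $\mathcal{P}_0(M)$, and no nondegenerate probability density admits a convolution inverse (variance and entropy strictly increase under nontrivial convolution). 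Establishing a genuine Polish \emph{group} therefore requires either passing to a Grothendieck group of the semigroup, or restricting to a convolution-closed one-parameter family (for instance the Gaussian or the gamma family central to this paper) that embeds into a group; I would present the semigroup conclusion and make this restriction explicit unless the intended group is specified otherwise.
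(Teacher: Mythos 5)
Your proposal is sound, and it is considerably more careful than what the paper actually supplies: the paper offers no proof of its own, only pointers to Billingsley (Section 5) and Kechris (Theorem 17.23), and those references establish exactly your first step --- that $\mathcal{P}(M)$ with the weak topology is Polish, via the L\'evy--Prokhorov metric, tightness, and approximation by finitely supported measures --- and nothing about the remaining two assertions. Where you go beyond the citations is precisely where the statement itself breaks down, and your diagnosis is right on both counts. Absolutely continuous measures are weakly \emph{dense} in $\mathcal{P}(M)$ (mollify any measure by small Gaussians), so they cannot be weakly closed; the closedness claim survives only in the total-variation topology, which is indeed the topology generated by the paper's displayed basis $U_{\underline{f};\varepsilon}$ (taking $f\equiv 1$ recovers the total-variation norm), at the price of losing separability of the ambient $\mathcal{P}(M)$, since the Dirac masses are pairwise at distance $2$. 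Your repair --- identifying $\mathcal{P}_0(M)$ with the closed convex set of densities in $L^1$ and making it Polish in its own metric --- is the standard correct formulation. Likewise the group claim fails as stated: the convolution identity $\delta_0$ is not absolutely continuous, and if $\mu*\nu=\delta_0$ then $\hat{\mu}\hat{\nu}\equiv 1$ with $|\hat{\mu}|,|\hat{\nu}|\le 1$ forces $\mu$ to be a point mass, so no element of $\mathcal{P}_0(M)$ is invertible and $(\mathcal{P}_0(M),*)$ is only a Polish abelian semigroup. These are flaws in the paper's statement, not gaps in your argument.

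One further defect worth flagging, which neither you nor the paper addresses: the proposition is meant to apply to all the manifolds $M$ listed in that section, but when $M$ is a proper submanifold of $M_{n,k}$ (Stiefel, Grassmannian, flag manifold), no probability measure supported on $M$ is absolutely continuous with respect to Lebesgue measure on $M_{n,k}$, so $\mathcal{P}_0(M)$ as literally defined is empty; moreover the ambient-space convolution you construct pushes mass off $M$. The statement is salvageable in those cases only by replacing Lebesgue measure with the invariant (Haar or homogeneous-space) measure on $M$ and defining convolution through the group action --- and even then one obtains a semigroup, not a group, for the same reason as above.
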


Without loss of generality, we could take $M$ to be
\[\begin{aligned}
    M_{n,k} & = \left\{\text{all } n\times k \text{ matrices}\right\},\\
    {\rm St}_{n,k} & = \left\{\text{all } n\times k \text{ matrices $X$ such that } X'X =I_k\right\},\\
    {\rm Gr}_{n,k} & = \left\{\text{all } k\text{-dimensional subspaces in } \mathbb{K}^n\right\},\\
    {\rm Flag}(\lambda,\mathbb{K}^n) & = \left\{\text{all chains of subspaces encoded by } \lambda \text{ in } \mathbb{K}^n\right\},
\end{aligned}\]
the space of rectangular matrices, the Stiefel manifold of orthogonal-invariant frames, the Grassmannian of $k$-dimensional subspaces, or the flag variety, with $\mathbb{K} = \mathbb R, \mathbb C, \mathbb H.$ But we are only interested in the real gamma distribution in this paper, thereby all entries in matrices should be understood to be real scalars. 

A distribution $p(X) \in \mathcal{P}_0 (M_{n,k})$ is called left-spherical if $p(\gamma X) = p(X)$ for any $\gamma \in O_n$. \cite{1990Generalized} proved that 
\begin{proposition}\label{prop: stiefel}
    There exists only one left-spherical distribution on ${\rm St}_{n,k}$.
\end{proposition}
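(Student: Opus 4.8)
The plan is to recognise that the hypothesis $p(\gamma X) = p(X)$ for all $\gamma \in O_n$ says precisely that $p$ is an $O_n$-invariant probability measure on ${\rm St}_{n,k}$, where $O_n$ acts on the left by $X \mapsto \gamma X$. This action does preserve ${\rm St}_{n,k}$, since $(\gamma X)'(\gamma X) = X'\gamma'\gamma X = X'X = I_k$, and it is transitive: any orthonormal $k$-frame extends to an orthonormal basis of $\mathbb{R}^n$, so any two frames differ by an element of $O_n$. The stabiliser of a fixed reference frame $E = (e_1,\dots,e_k)$ is the block subgroup $\{I_k\}\oplus O_{n-k} \cong O_{n-k}$, giving the identification ${\rm St}_{n,k} \cong O_n / O_{n-k}$ already visible in the bundle $O_k \times O_{n-k} \to O_n \to {\rm Gr}_{n,k}$ of Section 2. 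Thus the statement is exactly the uniqueness of an invariant probability measure on a compact homogeneous space.

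For existence, I would take the normalised Haar measure $[dH]$ on the compact group $O_n$ and push it forward along $\pi \colon O_n \to {\rm St}_{n,k}$, $\gamma \mapsto \gamma E$. Left-invariance of $[dH]$ makes $\mu := \pi_*[dH]$ an $O_n$-invariant probability measure, which in the coordinates of Proposition \ref{prop: herz55} is the normalised invariant volume $(dK)$ on ${\rm St}_{n,k}$. Since $\mu$ is absolutely continuous with respect to the intrinsic invariant volume, it lies in $\mathcal{P}_0({\rm St}_{n,k})$ and is left-spherical.

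The heart of the argument is uniqueness, which I would establish by an averaging trick exploiting transitivity. Given any left-spherical $p$ and any $f \in C_b({\rm St}_{n,k})$, set
\[
\bar f(X) = \int_{O_n} f(\gamma X)\,[dH(\gamma)].
\]
Right-invariance of Haar measure shows $\bar f$ is $O_n$-invariant, and transitivity then forces it to be constant, equal to $\int f\,d\mu$. Invariance of $p$ gives $\int f(\gamma X)\,dp(X) = \int f\,dp$ for every $\gamma$; averaging this identity over $\gamma \in O_n$ and applying Fubini (legitimate because $f$ is bounded and $O_n$ is compact, so both measures are finite) yields
\[
\int f\,dp = \int_{O_n}\!\!\int f(\gamma X)\,dp(X)\,[dH(\gamma)] = \int \bar f(X)\,dp(X) = \int f\,d\mu.
\]
As this holds for every $f \in C_b({\rm St}_{n,k})$, we conclude $p = \mu$.

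The main obstacle is conceptual rather than computational: one must verify transitivity cleanly and confirm that ``left-spherical'' is the correct reading of the invariance hypothesis, so that the problem collapses to the classical uniqueness of an invariant measure on a compact homogeneous space. The remaining measure-theoretic points — finiteness of Haar measure, the Fubini interchange, and checking that $\mu$ is genuinely absolutely continuous rather than merely a singular surface measure — are routine once compactness of $O_n$ is invoked, but they are where one must take care to remain inside $\mathcal{P}_0$.
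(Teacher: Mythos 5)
Your proof is correct, but note that the paper does not actually prove this proposition: it is stated with a bare attribution to \cite{1990Generalized}, and the surrounding text only identifies the distribution with ``the unique left-Haar measure'' on ${\rm St}_{n,k}$. So your argument is necessarily a different route --- a complete, self-contained proof where the paper supplies only a citation. What you give is the classical Weil-type argument on a compact homogeneous space: existence by pushing the normalised Haar measure of $O_n$ forward along $\gamma \mapsto \gamma E$, and uniqueness by averaging a bounded continuous test function over $O_n$, using bi-invariance of Haar measure on the compact group, transitivity of the action, and Fubini. This buys two things beyond the paper's treatment. First, it proves more than the statement requires: uniqueness holds among \emph{all} $O_n$-invariant probability measures on ${\rm St}_{n,k}$, not merely among those in $\mathcal{P}_0({\rm St}_{n,k})$. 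Second, your identification ${\rm St}_{n,k} \cong O_n/O_{n-k}$ quietly corrects the paper's own framing: for $0<k<n$ the Stiefel manifold is a homogeneous space and not a group, so ``left-Haar measure on the Polish group ${\rm St}_{n,k}$'' is not literally meaningful, whereas ``invariant measure on $O_n/O_{n-k}$'' is exactly the right notion and is what makes the averaging argument applicable. The one genuine wrinkle is the one you already flag: as written, $\mathcal{P}_0(M)$ requires absolute continuity with respect to Lebesgue measure on $M_{n,k}$, which no measure carried by the lower-dimensional submanifold ${\rm St}_{n,k}$ can satisfy; your reading --- absolute continuity with respect to the intrinsic invariant volume, i.e.\ the $(dK)$ of Proposition \ref{prop: herz55} --- is the only sensible interpretation, and under it your $\mu$ does lie in $\mathcal{P}_0({\rm St}_{n,k})$, so the proof goes through.
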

This distribution coincides with the unique left-Haar measure on the Polish group ${\rm St}_{n,k}$, and we will call it a uniform distribution. Moreover, \cite{1990Generalized} introduced two other classes of spherical distributions, namely the multivariate spherical distribution and vector spherical distribution. To be precise, a distribution $p(X) \in \mathcal{P}_0 (M)$ is called multivariate spherical if $p(\gamma_1 x_1, \gamma_2 x_2, \dots, \gamma_k x_k) = p(x_1,x_2,\dots,x_k)$ for any $\gamma_i \in O_n, i = 1,2,\dots,k$; vector spherical if $p(\varGamma {\rm vec} (X')) = p({\rm vec} (X'))$ for any $\varGamma \in O(nk)$. Note that ${\rm vec} (X')$ is the vector $$x_{11},x_{12},\dots,x_{1k},x_{21},x_{22},\dots,x_{nk}.$$ 
For the elliptical spherical distribution, we shall adopt another definition. Before that, we define a normal matrix to have each of its entries a normal variable and give some special classes. 

\begin{definition} An $n\times k$ matrix $X$ is said to follow a matrix normal distribution with the probability density function $p(X)$
	\begin{itemize}
		\item[$T_{1}$] if there exist orthonormal column vectors $\{ b_{j}\}$ of length $k$ such that
        \begin{equation}
            p(X) = c_{1}\operatorname{etr} \left(-\frac {1}{2} \sum_{j=1}^{k} { A}_{jj}  X{ B}_{jj}{ X}^{\prime} - \sum_{j=1}^{k}\sum_{j^{\prime}=j+1}^{k}{ A}_{jj^{\prime}}  X{ B}_{jj^{\prime}}{ X}^{\prime}\right),
			\label{eq: T1}
		\end{equation}
		where $ B_{jj^{\prime}} =  b_{j} { b_{j^{\prime}}}^{\prime}$, 
        $A_{ij}^{\prime} = A_{ji}$, the diagonal entries in $ A_{jj^{\prime}}$ are positive if $j=j^{\prime}$ and equal to zero if $j\neq j^{\prime}$;
        \item[$T_{1\frac{1}{2}}$] if it is $T_{1}$ and additionally, $ A_{jj^{\prime}} =  A_{jj}\delta_{jj^{\prime}}$ if $j\neq j^{\prime}$,
		\begin{equation}	
        p( X) = c_{1\frac{1}{2}}\operatorname{etr} \left(-\frac {1}{2} \sum_{j=1}^{k} { A}_{jj} X{ B}_{jj}{ X}^{\prime}\right);		\label{eq: T1.5}		\end{equation}

        \item[$T_{2}$] if it is $T_{1}$ and there exist further orthonormal column vectors $\{ a_{i}\}$ of length $n$ such that 
		\begin{equation}
			 p( X) =
             c_2\operatorname{etr} \left(-\frac {1}{2} \sum_{i=1}^{n}\sum_{j=1}^{k} \gamma_{ij} { A}_{i} X{ B}_{j}{ X}^{\prime}\right),
			\label{eq: T2}
		\end{equation}
		where $ A_{i} =  a_{i}{ a_{i}}^{\prime}$ and $ B_{j} =  b_{j}{ b_{j}}^{\prime}$;

        \item[$T_3$] if it is $T_{2}$ and additionally, there exist $\alpha_{i}, \beta_{j}$ such that $\gamma_{ij} = \alpha_{i} \beta_{j}$,
			\begin{equation}
				p( X) = 
                c_3 \operatorname{etr} \left(-\frac {1}{2}  \varPhi^{-1}  X  \varPsi^{-1} {{ X}}^{\prime}\right),
				\label{eq: T3}
			\end{equation}
			where $ \varPhi^{-1} = \sum_{i=1}^{n} \alpha_{i}  a_{i} { a_{i}}^{\prime}$ and $ \varPsi^{-1} = \sum_{j=1}^{p} \beta_{j}  b_{j}{ b_{j}}^{\prime}$.     
	\end{itemize}\label{def: normal matrix}
\end{definition}

\begin{theorem} The constants $c_1$, $c_{1\frac{1}{2}}$, $c_2$, and $c_3$ in Definition 1 and their corresponding precision matrices $\varTheta = \varSigma^{-1}$ where $\varSigma = \operatorname{Cov}(\operatorname{vec}(X'))$, are 
\begin{align}
        & c_1 = \frac{| \varTheta_{1}|^{\frac{1}{2}}}{(2\pi)^{\frac{np}{2}}}, \quad  \varTheta_1 = \sum_{j=1}^{p}\sum_{j^{\prime}=1}^{p}  A_{jj^{\prime}} \otimes  B_{jj^{\prime}}, \\
    & c_{1\frac{1}{2}} = \frac{| \varTheta_{1\frac{1}{2}}|^{\frac{1}{2}}}{(2\pi)^{\frac{np}{2}}} , \quad  \varTheta_{1\frac{1}{2}} = \sum_{j=1}^{p}  A_{jj} \otimes  B_{jj}, \\
    & c_{2} = \frac{| \varTheta_{2}|^{\frac{1}{2}}}{(2\pi)^{\frac{np}{2}}}, \quad   \varTheta_{2} = \sum_{i=1}^{n}\sum_{j=1}^{p} \gamma_{ij}  A_{i} \otimes  B_{j},\\
        & c_{3} = \frac{| \varTheta_{3}|^{\frac{1}{2}}}{(2\pi)^{\frac{np}{2}}} ,\quad   \varTheta_{3} =   \varPhi^{-1} \otimes  \varPsi^{-1},
    \label{prop 1}
\end{align}
where $\operatorname{vec}(\cdot)$ is the vectorisation of matrices and $\otimes$ is the Kronecker product. 
\label{prop: nested precision form}
\end{theorem}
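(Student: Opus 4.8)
The plan is to reduce every density in Definition~\ref{def: normal matrix} to a \emph{centered} multivariate normal law on $\mathbb{R}^{np}$ and then read off both the precision matrix and the normalising constant from the standard Gaussian formula. Writing $x = \operatorname{vec}(X')$ for the row-major vectorisation, the key elementary identity is $\operatorname{tr}(A X B X') = x'(A \otimes B)\,x$ whenever $A$ is an $n\times n$ \emph{symmetric} matrix and $B$ is $p\times p$; this follows by expanding both sides in coordinates, since $(A\otimes B)_{(a,b),(c,d)} = A_{ac}B_{bd}$ matches the coefficient of $X_{ab}X_{cd}$ in $\sum_{i}(AXBX')_{ii}$ once $A_{ac}=A_{ca}$ is used. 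As soon as an exponent is brought to the form $-\tfrac12\, x'\varTheta\, x$ with $\varTheta$ symmetric positive definite, the function $p(X) = c\,\exp(-\tfrac12 x'\varTheta x)$ is the density of $x \sim N_{np}(0,\varTheta^{-1})$; hence $\varSigma = \operatorname{Cov}(\operatorname{vec}(X')) = \varTheta^{-1}$, so $\varTheta = \varSigma^{-1}$ is indeed the precision matrix, and the normalising constant of an $np$-dimensional centered Gaussian is $c = (2\pi)^{-np/2}|\varSigma|^{-1/2} = (2\pi)^{-np/2}|\varTheta|^{1/2}$. This already delivers all four formulas $c_\bullet = |\varTheta_\bullet|^{1/2}/(2\pi)^{np/2}$, provided each $\varTheta_\bullet$ is identified.

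I would treat $T_1$ first, since the other three are specialisations. Taking traces inside $\operatorname{etr}$ and applying the identity term by term, the diagonal part $-\tfrac12\sum_j \operatorname{tr}(A_{jj}XB_{jj}X')$ becomes $-\tfrac12\sum_j x'(A_{jj}\otimes B_{jj})x$, while the off-diagonal bilinear terms, after combining the $(j,j')$ and $(j',j)$ contributions with the help of $A_{jj'}' = A_{j'j}$, $B_{jj'}' = B_{j'j}$ and the scalar-transpose rule $u'Mw = w'M'u$, are designed to assemble into $-\tfrac12\sum_{j\ne j'} x'(A_{jj'}\otimes B_{jj'})x$. Summing gives $\varTheta_1 = \sum_{j,j'}A_{jj'}\otimes B_{jj'}$, which is symmetric precisely because $A_{jj'}' = A_{j'j}$. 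Zeroing the off-diagonal blocks yields $T_{1\frac12}$ with $\varTheta_{1\frac12} = \sum_j A_{jj}\otimes B_{jj}$; imposing the $T_2$ hypotheses ($A_i = a_ia_i'$, $B_j = b_jb_j'$) gives $\varTheta_2 = \sum_{i,j}\gamma_{ij}A_i\otimes B_j$; and the rank-one choice $\gamma_{ij} = \alpha_i\beta_j$ collapses this, by bilinearity of $\otimes$, to $\varTheta_3 = \bigl(\sum_i\alpha_i a_ia_i'\bigr)\otimes\bigl(\sum_j\beta_j b_jb_j'\bigr) = \varPhi^{-1}\otimes\varPsi^{-1}$. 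For $T_3$ one finally checks the constant against the classical matrix normal through the Kronecker determinant rule $|\varPhi^{-1}\otimes\varPsi^{-1}| = |\varPhi|^{-p}|\varPsi|^{-n}$, recovering $c_3 = |\varPhi|^{-p/2}|\varPsi|^{-n/2}(2\pi)^{-np/2}$.

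The step I expect to be the real obstacle is verifying that each $\varTheta_\bullet$ is genuinely symmetric positive definite, as this is exactly what guarantees integrability of the density and makes $|\varTheta_\bullet|^{1/2}$ a well-defined positive number; without it these are not probability measures and the constants are meaningless. For $T_2$ and $T_3$ this is clean: if $\{a_i\}_{i=1}^n$ and $\{b_j\}_{j=1}^p$ are orthonormal bases, then $\{a_i\otimes b_j\}$ is an orthonormal basis of $\mathbb{R}^{np}$ and $\varTheta_2 = \sum_{i,j}\gamma_{ij}(a_i\otimes b_j)(a_i\otimes b_j)'$ is already a spectral decomposition with eigenvalues $\gamma_{ij}$, so positivity is equivalent to $\gamma_{ij}>0$ (respectively $\alpha_i,\beta_j>0$, i.e. $\varPhi,\varPsi\succ0$). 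For $T_1$ and $T_{1\frac12}$ positivity amounts to the $np\times np$ block matrix $[A_{jj'}]$ (the matrix of $\varTheta_1$ in the $b$-basis of the second tensor factor) being positive definite, which I would record as the standing hypothesis that makes the law well-defined. The second delicate point, flagged above, is the transpose bookkeeping in the off-diagonal terms: because each written cross term is a genuine bilinear form $v_{j'}'A_{jj'}v_j$ in the \emph{distinct} vectors $v_j = Xb_j$, one must track carefully which block of $\varTheta_1$ each summand populates, and it is here (through $A_{jj'}' = A_{j'j}$) that the exact matching with $\sum_{j,j'}A_{jj'}\otimes B_{jj'}$ has to be confirmed rather than merely asserted.
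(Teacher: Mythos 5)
Your overall strategy --- vectorise, apply the trace--Kronecker identity, and read the constant off the $np$-dimensional Gaussian --- is exactly the ``direct calculation'' the paper intends: it is the same identity $\operatorname{tr}(AXBX')=\operatorname{vec}(X')'(A\otimes B)\operatorname{vec}(X')$ that the paper uses to prove Theorem~\ref{prop: nested precision form 2}. Your handling of $T_{1\frac{1}{2}}$, $T_2$ and $T_3$ is correct, since there every Kronecker factor is symmetric; the spectral argument for positivity of $\varTheta_2,\varTheta_3$ and the determinant check $|\varPhi^{-1}\otimes\varPsi^{-1}|=|\varPhi|^{-p}|\varPsi|^{-n}$ are both sound.

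The gap sits precisely at the step you flag and then assert: the $T_1$ cross terms do \emph{not} assemble into $\sum_{j\neq j'}A_{jj'}\otimes B_{jj'}$. Your own identity requires the first factor to be symmetric, but for $j\neq j'$ the matrix $A_{jj'}$ need not be symmetric (the definition only imposes $A_{jj'}'=A_{j'j}$ and zero diagonal), and $B_{jj'}=b_jb_{j'}'$ is not symmetric either. For general factors the identity reads $\operatorname{tr}(AXBX')=x'(A'\otimes B)x=x'(A\otimes B')x$, so pairing the $(j,j')$ and $(j',j)$ contributions yields the precision matrix
\[
\varTheta_1^{\mathrm{true}}=\sum_{j,j'}A_{jj'}\otimes B_{j'j},
\]
which differs from the displayed $\varTheta_1$ by $\sum_{j<j'}(A_{jj'}-A_{jj'}')\otimes(B_{jj'}-B_{jj'}')$, a partial-transpose discrepancy that preserves neither the spectrum nor the determinant. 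Concretely, take $n=p=2$, $b_j=e_j$, $A_{11}=\operatorname{diag}(1,2)$, $A_{22}=I_2$, $A_{12}=e_1e_2'$, $A_{21}=e_2e_1'$. The exponent of \eqref{eq: T1} is then $-\frac{1}{2}\left(x_{11}^2+x_{12}^2+2x_{21}^2+x_{22}^2+2x_{12}x_{21}\right)$, a positive definite form whose matrix has determinant $1$, so the correct constant is $c_1=(2\pi)^{-2}$; whereas, in the coordinate order $(x_{11},x_{12},x_{21},x_{22})$,
\[
\sum_{j,j'}A_{jj'}\otimes B_{jj'}=\begin{pmatrix}1&0&0&1\\0&1&0&0\\0&0&2&0\\1&0&0&1\end{pmatrix}
\]
encodes the different form $x_{11}^2+x_{12}^2+2x_{21}^2+x_{22}^2+2x_{11}x_{22}$ and is singular (rows $1$ and $4$ coincide), so your formula would give $c_1=0$. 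Hence the assembly claim is false as stated: your argument (and indeed the displayed formula for $\varTheta_1$) is valid only when every $A_{jj'}$ is symmetric, and the repair is to carry out the bookkeeping you postponed and prove the statement with $B_{j'j}$ in place of $B_{jj'}$ (equivalently $A_{j'j}$ in place of $A_{jj'}$) inside $\varTheta_1$. The cases $T_{1\frac{1}{2}}$, $T_2$, $T_3$ are untouched by this correction.
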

\begin{proof}[Proof of Theorem \ref{prop: nested precision form}] Direct calculation.
\end{proof}
    
\begin{theorem}\label{prop: nested precision form 2}
     $X\sim T_{i} \Leftrightarrow \operatorname{vec}( X ') \sim N_{np}( 0;  \varTheta^{-1}_i)$, $i = 1, 1\frac{1}{2}, 2, 3$.
\end{theorem}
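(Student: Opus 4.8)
The plan is to read each of the four densities in Definition \ref{def: normal matrix} as a centred Gaussian density in the coordinate vector $\operatorname{vec}(X')$, so that Theorem \ref{prop: nested precision form 2} becomes a direct translation of the precision-matrix and constant computation already recorded in Theorem \ref{prop: nested precision form}. The engine of the translation is the single bilinear identity
\begin{equation}
\operatorname{tr}(AXBX') = \operatorname{vec}(X')'\,(A'\otimes B)\,\operatorname{vec}(X'),\qquad A\in\mathbb{R}^{n\times n},\ B\in\mathbb{R}^{k\times k},\ X\in\mathbb{R}^{n\times k}, \label{eq: vec-trace}
\end{equation}
which I would establish first by setting $Y=X'$ and combining the two elementary rules $\operatorname{vec}(PQR)=(R'\otimes P)\operatorname{vec}(Q)$ and $\operatorname{tr}(M'N)=\operatorname{vec}(M)'\operatorname{vec}(N)$: writing $\operatorname{tr}(AXBX')=\operatorname{tr}\!\big(Y'(BYA)\big)=\operatorname{vec}(Y)'(A'\otimes B)\operatorname{vec}(Y)$ and substituting $Y=X'$ yields \eqref{eq: vec-trace}, with no symmetry hypothesis on $A$ or $B$.

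Applying \eqref{eq: vec-trace} term by term turns each exponent into a quadratic form in $\operatorname{vec}(X')$. For $T_{1\frac12}$, $T_2$ and $T_3$ every matrix occurring is symmetric (the $A_{jj}$, the rank-one projections $A_i=a_ia_i'$ and $B_j=b_jb_j'$, and $\varPhi^{-1},\varPsi^{-1}$), so the exponent collapses at once to $-\tfrac12\operatorname{vec}(X')'\varTheta_i\operatorname{vec}(X')$ with $\varTheta_i$ exactly the Kronecker sum listed in Theorem \ref{prop: nested precision form}. The case $T_1$ is the one demanding care, since its cross terms appear as an upper-triangular sum $\sum_{j<j'}$ and the off-diagonal blocks are only constrained by $A_{jj'}'=A_{j'j}$. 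Using that a quadratic form depends only on the symmetric part of its matrix, I would write each off-diagonal trace as $\operatorname{tr}(A_{jj'}XB_{jj'}X')=\operatorname{vec}(X')'(A_{j'j}\otimes B_{jj'})\operatorname{vec}(X')$ via \eqref{eq: vec-trace}, pair the $(j,j')$ and $(j',j)$ contributions, and use $A_{jj'}'=A_{j'j}$, $B_{jj'}'=B_{j'j}$ to reassemble the two halves into the symmetric off-diagonal block of $\varTheta_1=\sum_{j,j'}A_{jj'}\otimes B_{jj'}$, the doubling absorbing the coefficient difference between the $-\tfrac12$ on the diagonal and the $-1$ on the cross terms. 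This is precisely the direct calculation certified by Theorem \ref{prop: nested precision form}, so $T_1$ joins the other three cases.

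With the exponent identified in every case, the statement follows by comparing densities. Theorem \ref{prop: nested precision form} gives $c_i=|\varTheta_i|^{1/2}/(2\pi)^{np/2}$, so the $T_i$ density is literally $|\varTheta_i|^{1/2}(2\pi)^{-np/2}\exp\!\big(-\tfrac12\operatorname{vec}(X')'\varTheta_i\operatorname{vec}(X')\big)$, which is the $N_{np}(0;\varTheta_i^{-1})$ density evaluated at $\operatorname{vec}(X')$. Since $X\mapsto\operatorname{vec}(X')$ is merely a reordering of the $np$ entries of $X$, it is a linear bijection of unit Jacobian, so ``$X$ has density $p(X)$'' and ``$\operatorname{vec}(X')$ has the stated Gaussian density'' are equivalent; reading \eqref{eq: vec-trace} forward gives $\Rightarrow$ and backward gives $\Leftarrow$ for each $i\in\{1,1\tfrac12,2,3\}$.

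The main obstacle is not the algebra of \eqref{eq: vec-trace} but the bookkeeping and well-posedness concentrated in the $T_1$ case. One must verify that the antisymmetric parts of the off-diagonal blocks do not survive the symmetrisation, so that the stated $\varTheta_1$ is genuinely the symmetric matrix governing the quadratic form, and one must check that each $\varTheta_i\succ0$ so that $N_{np}(0;\varTheta_i^{-1})$ is a non-degenerate Gaussian and $c_i$ is a positive real constant. The positivity is exactly what the sign and orthonormality constraints in Definition \ref{def: normal matrix} — positive diagonal blocks, rank-one $A_i,B_j$ built from orthonormal frames — are designed to secure, and I would insert a short lemma confirming $\varTheta_i\succ0$ under those hypotheses before invoking the Gaussian density formula.
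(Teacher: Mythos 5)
Your route is the paper's route: the paper's entire proof of this theorem is the single trace--vec identity combined with the constants and precision matrices recorded in Theorem \ref{prop: nested precision form}, and your proposal is a careful elaboration of exactly that line. For $i=1\tfrac12,\,2,\,3$ your elaboration is complete and correct, since every matrix entering those exponents is symmetric and the identity applies with no bookkeeping.

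The genuine gap is in the $T_1$ case, at precisely the step you flagged as delicate and then waved through. Your identity (correctly carrying a transpose) gives $\operatorname{tr}(A_{jj'}XB_{jj'}X')=\operatorname{vec}(X')'(A_{j'j}\otimes B_{jj'})\operatorname{vec}(X')$, so symmetrising the unordered pair $\{j,j'\}$ produces the block $A_{j'j}\otimes B_{jj'}+A_{jj'}\otimes B_{j'j}$, and the full exponent is $-\tfrac12\operatorname{vec}(X')'\bigl(\sum_{j,j'}A_{jj'}\otimes B_{j'j}\bigr)\operatorname{vec}(X')$. That matrix is \emph{not} $\varTheta_1=\sum_{j,j'}A_{jj'}\otimes B_{jj'}$: the two differ whenever some off-diagonal $A_{jj'}$ is non-symmetric, which Definition \ref{def: normal matrix} permits (it imposes only $A_{jj'}'=A_{j'j}$ and vanishing diagonal). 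Concretely, take $n=k=2$, $b_j=e_j$, $A_{11}=A_{22}=I_2$, and $A_{12}=\left(\begin{smallmatrix}0&a\\ b&0\end{smallmatrix}\right)$ with $a\neq b$ small (so the density is a legitimate one): the cross term in the $T_1$ exponent is $-(a x_{12}x_{21}+b x_{11}x_{22})$, whereas $-\tfrac12\operatorname{vec}(X')'\varTheta_1\operatorname{vec}(X')$ contains $-(a x_{11}x_{22}+b x_{12}x_{21})$. So ``reassemble the two halves into the off-diagonal block of $\varTheta_1$'' is a step that fails; the antisymmetric parts do cancel, but what survives is $\sum_{j,j'}A_{jj'}\otimes B_{j'j}$ (equivalently $\sum_{j,j'}A_{jj'}'\otimes B_{jj'}$), not the stated $\varTheta_1$. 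Note that your corrected identity $\operatorname{tr}(AXBX')=\operatorname{vec}(X')'(A'\otimes B)\operatorname{vec}(X')$ is exactly what exposes this, while the paper's transpose-free version (valid only when the first argument is symmetric) is what hides it: the mismatch is inherited from the source, but a proof cannot inherit it. To close the argument you must either prove the theorem with $\varTheta_1$ replaced by $\sum_{j,j'}A_{jj'}\otimes B_{j'j}$, or add the hypothesis that each $A_{jj'}$ is symmetric.

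A secondary inaccuracy: the constraints of Definition \ref{def: normal matrix} do not ``secure'' $\varTheta_i\succ0$. Positivity of the diagonal entries of $A_{jj}$ does not make $A_{jj}$, hence does not make $\varTheta_{1\frac12}=\sum_j A_{jj}\otimes b_jb_j'$, positive definite. Nondegeneracy has to be carried as a standing hypothesis (equivalently, integrability of the written densities); the short lemma you propose is not provable from the stated constraints alone.
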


\begin{proof}[Proof of Theorem \ref{prop: nested precision form 2}]
    $\operatorname{tr} ( A X B{ X}') = {\operatorname{vec}( X')}^{\prime}( A \otimes  B) \operatorname{vec}( X' )$.
\end{proof}

\begin{corollary}\label{prop: nested precision form 3}
$T_{1}\supset T_{1\frac{1}{2}} \supset T_{2} \supset T_{3}$. 
\end{corollary}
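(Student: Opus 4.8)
The plan is to read each of $T_1,T_{1\frac12},T_2,T_3$ as the \emph{set} of densities $p(X)$ admitting the stated representation, and to prove the three containments in the chain one at a time. Two of them are immediate from the way Definition~\ref{def: normal matrix} is layered, and the only substantive step is the middle inclusion $T_{1\frac12}\supseteq T_2$.

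I would first dispose of the outer two. For $T_1\supseteq T_{1\frac12}$, observe that \eqref{eq: T1.5} is exactly \eqref{eq: T1} with every off-diagonal block set to $A_{jj'}=0$ ($j\neq j'$); this is an admissible specialisation of the $T_1$ parameters, so every $T_{1\frac12}$ density is a $T_1$ density. Symmetrically, for $T_2\supseteq T_3$, the density \eqref{eq: T3} is the case of \eqref{eq: T2} with the factorisation $\gamma_{ij}=\alpha_i\beta_j$, where $\varPhi^{-1}=\sum_i\alpha_i a_i a_i'$ and $\varPsi^{-1}=\sum_j\beta_j b_j b_j'$; hence every $T_3$ density is a $T_2$ density. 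For the central inclusion I would regroup the double sum in \eqref{eq: T2}, collecting the $i$-summation into the coefficient of each $B_j$:
\[
\sum_{i=1}^{n}\sum_{j=1}^{k}\gamma_{ij}A_i X B_j X'=\sum_{j=1}^{k}\Big(\sum_{i=1}^{n}\gamma_{ij}a_i a_i'\Big)X\,b_j b_j'\,X'=\sum_{j=1}^{k}\widetilde{A}_{jj}\,X B_{jj}X',
\]
with $\widetilde{A}_{jj}:=\sum_i\gamma_{ij}a_i a_i'$ symmetric and $B_{jj}=b_j b_j'$. This is precisely the form \eqref{eq: T1.5}, so a $T_2$ density coincides with a $T_{1\frac12}$ density for the blocks $\widetilde A_{jj}$.

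What remains, and what I expect to be the main (if modest) obstacle, is to check that this regrouping respects the admissibility constraints of Definition~\ref{def: normal matrix} rather than merely rewriting the exponent. Here I would lean on the bilinearity of the Kronecker product: by Theorem~\ref{prop: nested precision form},
\[
\varTheta_{1\frac12}=\sum_{j}\widetilde A_{jj}\otimes B_{jj}=\sum_{j}\Big(\sum_i\gamma_{ij}a_i a_i'\Big)\otimes(b_j b_j')=\sum_{i,j}\gamma_{ij}A_i\otimes B_j=\varTheta_2,
\]
so the two precision matrices are literally equal; the normalising constants therefore agree and, via Theorem~\ref{prop: nested precision form 2}, validity of the law transfers verbatim. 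The positivity of the diagonal entries of each $\widetilde A_{jj}$ also comes for free: since $\{a_i\otimes b_j\}$ is an orthonormal basis, the identity above is the spectral decomposition of $\varTheta_2$ with eigenvalues $\gamma_{ij}$, so $\varTheta_2\succ0$ forces every $\gamma_{ij}>0$, whence each $\widetilde A_{jj}=\sum_i\gamma_{ij}a_i a_i'$ is positive definite and in particular has positive diagonal. Chaining the three containments yields $T_1\supseteq T_{1\frac12}\supseteq T_2\supseteq T_3$. The inclusions are in fact strict, because the $T_2$ form forces all $\widetilde A_{jj}$ to share the common eigenbasis $\{a_i\}$, a restriction absent from $T_{1\frac12}$; strictness, however, is not required for the statement.
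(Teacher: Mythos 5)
Your proposal is correct and follows essentially the route the paper intends: the paper states this as an immediate corollary of the definitional nesting together with Theorems \ref{prop: nested precision form} and \ref{prop: nested precision form 2}, i.e.\ the containments are read off by comparing the precision forms $\varTheta_1,\varTheta_{1\frac12},\varTheta_2,\varTheta_3$, which is exactly your Kronecker-bilinearity argument. Your explicit treatment of the one non-obvious inclusion $T_{1\frac12}\supseteq T_2$ (regrouping $\widetilde A_{jj}=\sum_i\gamma_{ij}a_ia_i'$ and deducing $\gamma_{ij}>0$ from the spectral decomposition of $\varTheta_2$) is a careful spelling-out of what the paper leaves implicit, and it is sound.
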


\begin{theorem} 
The sample covariance $ X^{\prime} X$, where $ X \sim T_{i}, i=1, 1\frac{1}{2}, 2, 3$, is positive definite with probability one if and only if $n  > p -1 $.
    \label{lem: dykstral70}
\end{theorem}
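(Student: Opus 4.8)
The plan is to collapse all four types to a single non-degenerate Gaussian on $\mathbb{R}^{np}$ and then argue that rank deficiency is a measure-zero event. By Theorem \ref{prop: nested precision form 2}, for each $i = 1, 1\frac{1}{2}, 2, 3$ one has $X \sim T_i$ if and only if $\operatorname{vec}(X') \sim N_{np}(0, \varTheta_i^{-1})$. Each $\varTheta_i$ is a genuine precision matrix, hence symmetric positive definite and invertible, so $\varTheta_i^{-1}$ is positive definite and the law of $\operatorname{vec}(X')$ is a non-degenerate Gaussian; in particular it belongs to $\mathcal{P}_0(M_{n,p})$, i.e.\ it is absolutely continuous with respect to Lebesgue measure on $M_{n,p} \cong \mathbb{R}^{np}$. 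The only linear-algebraic input I would record is the elementary equivalence that, for an $n \times p$ matrix $X$, the Gram matrix $X'X$ is positive definite if and only if $X$ has full column rank $p$.

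For the necessity direction I would argue by contraposition. If $n \leq p - 1$, then $\operatorname{rank}(X) \leq \min(n,p) = n < p$ for every realisation of $X$, so the $p \times p$ matrix $X'X$ is singular with probability one and is never positive definite; hence $X'X > 0$ with probability one can hold only if $n > p - 1$.

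For the sufficiency direction, assume $n \geq p$. Set
\[ \mathcal{N} = \{ X \in \mathbb{R}^{n \times p} : \operatorname{rank}(X) < p \}, \]
which is exactly the common vanishing locus of the $\binom{n}{p}$ maximal $p \times p$ minors of $X$. Since $n \geq p$ these minors are not all identically zero, so $\mathcal{N}$ is contained in the zero set of a single nonzero polynomial and is therefore a proper algebraic subvariety of $\mathbb{R}^{np}$, of Lebesgue measure zero. Because the law of $\operatorname{vec}(X')$ is absolutely continuous, I conclude $P(X \in \mathcal{N}) = 0$, so $\operatorname{rank}(X) = p$ and $X'X > 0$ with probability one.

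The only substantive obstacle is the non-degeneracy of the Gaussian, namely the positive-definiteness (and hence invertibility) of each $\varTheta_i$. This is delicate for $T_1$, where $\varTheta_1 = \sum_{j,j'} A_{jj'} \otimes B_{jj'}$ is a sum of Kronecker products whose off-diagonal summands with $j \neq j'$ need not individually be positive; however it is already guaranteed by the standing hypothesis that $T_1$ defines a bona fide probability density, equivalently that $\varTheta_1$ plays the role of a precision matrix in Theorem \ref{prop: nested precision form 2}. Once that positivity is granted, the proper-subvariety argument is entirely routine.
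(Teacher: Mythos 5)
Your proof is correct, but it takes a genuinely different route from the paper's. The paper argues column by column: writing $X = (x_1,\dots,x_p)$, it bounds the probability that the columns are linearly dependent by a union bound over the events ``$x_i$ is a linear combination of the remaining columns,'' and kills each term by passing to rows and invoking the fact that a normal vector with non-degenerate components lies in a fixed proper subspace with probability zero; the necessity half is left implicit. You instead vectorise once and for all: by Theorem~\ref{prop: nested precision form 2}, $\operatorname{vec}(X')$ is a non-degenerate Gaussian on $\mathbb{R}^{np}$ --- non-degeneracy being exactly the positive definiteness of $\varTheta_i$, which, as you correctly flag, is forced by $T_i$ being a bona fide density --- and then you observe that the rank-deficient locus $\{\operatorname{rank}(X) < p\}$ is contained in the zero set of a single nonzero maximal minor, hence is a Lebesgue-null proper algebraic subvariety, so absolute continuity finishes the sufficiency, while the trivial bound $\operatorname{rank}(X) \le n < p$ gives necessity. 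Your route buys uniformity and robustness: it needs no case analysis over the four types, it applies verbatim to any absolutely continuous matrix law (the Gaussian structure is irrelevant beyond non-degeneracy), and it avoids the paper's delicate conditioning step and its appeal to ``independent components'' of the rows, which is not literally guaranteed under $T_1$ and is the loosest point of the paper's argument. The paper's route, in exchange, is elementary --- no appeal to the fact that zero sets of nonzero polynomials are Lebesgue-null --- and stays close in spirit to the column-wise analysis of Dykstra (1970) after which the result is labelled.
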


\begin{proof}[Proof of Theorem \ref{lem: dykstral70}] 
    It suffices to show that ${X}^{\prime}{X}$ is non-singular with probability one if and only if $n> p -1$. Here, we will use ${x}_{1}, \dots, {x}_{p}$ to represent the column vectors of $X$ and similarly, ${\rm x}_{1}, \dots, {\rm x}_{n}$ the row vectors of $X$. 
	 \[\begin{aligned}	 	& {P} ({x}_{1}, \dots, {x}_{p} \text{ are linearly dependent})\\	 \leq &\sum_{i=1}^{p}{P} ({x}_{i} \text{ is a linear combination of }{x}_{1}, \dots, {x}_{i-1}, {x}_{i+1},{x}_{p})\\
     = & \sum_{i=1}^{p}{P} \left(\exists {b}_{1}, \dots, {b}_{i-1}, {b}_{i+1},{b}_{p} \in \mathbb R, {x}_{i}= \sum_{i^{\prime} \neq i}b_{i^{\prime}}x_{i^{\prime}}\right)\\
     \leq &  \sum_{j=1}^{p} \min_{i=1,2,\dots, n} {P} \left(\exists  {b}_{1}, \dots, {b}_{j-1}, {b}_{j+1},{b}_{p} \in \mathbb R, {x}_{ij}= \sum_{j^{\prime} \neq j}b_{j^{\prime}}x_{ij^{\prime}}\right)\\
     \leq &  p\cdot \min_{i=1,2,\dots, n} {P} \left(\exists b  \in \mathbb R^{k\times 1}, {\rm x}_{i} {b}^{\prime} = 0\right)= p \cdot 0 = 0\end{aligned} \]
    where in the last line, we use the fact that each ${\rm x}_{i}$ has independent components so that the $p$-dimensional normal distribution lies in the ($p-1$)-dimensional subspace with probability zero. 
\end{proof}

In practice, a problem occurs quite often that the number of parameters to be estimated is too large when we have no prior knowledge of the normal samples. Usually, the determination of the sample variances and covariances of $n$ samples in $k$ variables has, assuming normality for simplicity, $\frac{1}{2}nk(nk-1)$ correlation coefficients or regression coefficients to be estimated, approximately $O\big((nk)^2\big)$ unknown parameters. These notations $T_{1}$, $T_{1\frac{1}{2}}$, $T_{2}$, and $T_{3}$ promote an ad hoc approach to handle this dimensionality curse, and their stochastic representations may help understand the elliptical contoured distributions in the most general sense.

        \begin{table}[!h]
        \caption{Characterisations of left-spherical (LS), multivariate spherical (MS), vector spherical (VS) distributions and their extensions to elliptical distributions.}
    \centering
    \begin{tabular}{|c|c|c|c|c|}
        \hline
        Type & $-2\log(\text{M.G.F.})$  & D.O.F. & Class & M.G.F.\\
        \hline
        $T_{1}$ & $ \sum_{j=1}^{k}\sum_{j^{\prime}=1}^{k}
         t_{j}^{\prime}{ A}_{jj^{\prime}} t_{j^{\prime}}$ & $\frac{1}{2}n(n-1)k^2 + nk$ & LE & $\phi( t_{i}' A_{ij} t_{j})$\\
        \hline
        $T_{1\frac{1}{2}}$ &  $ \sum_{j=1}^{k} t_{j}^{\prime}{ A}_{jj} t_{j}$ & $\frac{1}{2}n(n+1)k$ & ME & $\phi( t_{i}' A_{ii} t_{i})$ \\
        \hline
        $T_{2}$ & $ \sum_{i=1}^{n}\sum_{j=1}^{k}\gamma_{ij}  t_{j}^{\prime}{ A}_{i} t_{j} $ & $ nk$ &  ME & $\phi( t_{i}' A_{ii} t_{i})$ \\
        \hline
        $T_{3}$ & $\sum_{i=1}^{n}\sum_{j=1}^{k}\alpha_{i}\beta_{j}  t_{j}^{\prime}{ A}_{i} t_{j}  $ & $n+k$ & VE & $\phi(\sum  t_{i}' A_{ii} t_{i})$\\
        \hline
    \end{tabular}\\
    \label{tab: matrix normal}
    where $ T = ( t_{1},\dots, t_{n})$ and $ X= ( x_{1},\dots, x_{n})$ are both $n \times p$ matrices, $ A_{ij}'=  A_{ji}$, and the M.G.F. is ${E}\exp (\sum t_{ri}x_{ri})$. These terminologies LE, ME, VE are slightly modified from \cite{1990Generalized}.
\end{table}

From Table \ref{tab: matrix normal}, we can see that the degrees of freedom are significantly reduced in the four matrix normal populations $T_{1}$, $T_{1\frac{1}{2}}$, $T_{2}$, and $T_{3}$, which is equivalent to the spectral decompositions of the four nested types of precision matrices described in Proposition \ref{prop: nested precision form 3}. 

\section{Inverse function method for generation}

The generation of matrix-variate normal distributions relies on the transformation of independent uniform samples into structured ones. This section outlines the transition from scalar white noise to the structured precision models $T_1, T_{1\frac{1}{2}}, T_2,$ and $T_3$ based on Box-Muller transformation and Proposition \ref{prop: stiefel}.

\subsection{Box-Muller transform}

The standard generation transform is a polar coordinate mapping proposed by \cite{box1958note}. Given two independent uniformly distributed samples $u_1, u_2 \sim U(0, 1)$, we define
\begin{eqnarray}
    z_0 = \sqrt{-2 \ln u_1} \cos(2\pi u_2), \quad 
    z_1 = \sqrt{-2 \ln u_1} \sin(2\pi u_2). \label{eq: BM transformation}
\end{eqnarray}
From the perspective of the inverse cumulative distribution function (CDF), let $X \sim N(0,1)$ with CDF $F(x)$. Then $F^{-1}(u)$ transforms $u \sim U(0,1)$ to a normal variable. The Box-Muller method bypasses the computationally expensive $F^{-1}$ by utilizing the fact that the joint distribution of two independent normal variables is rotationally invariant. Precisely, instead of attacking the one-dimensional inverse, the Box-Muller transformation considers two independent standard normal variables \(X,Y\). Their joint density is
\[
f(x,y)=\frac{1}{2\pi}e^{-(x^2+y^2)/2},
\]
which in polar coordinates \((R,\Theta)\) becomes
\[
f(r,\theta) r dr d\theta = \frac{1}{2\pi}e^{-r^2/2}r dr d\theta .
\]
Now \(R\) and \(\varTheta\) are not only independent, but their CDFs are easy to write down
\[
F_R(r)=1-e^{-r^2/2}\quad(r>0),\qquad
F_\varTheta(\theta)=\frac{\theta}{2\pi}\quad(0<\theta<2\pi).
\]
Inverting each gives
\[
R=\sqrt{-2\ln(1-U_1)},\qquad \Theta=2\pi U_2,
\]
and substituting back \(X=R\cos\varTheta,\;Y=R\sin\varTheta\), together with the fact that \(1-U_1\) and \(U_1\) share the same distribution, yields the classic form \eqref{eq: BM transformation}.

\subsection{Marsaglia-Bray method}

To avoid the evaluation of trigonometric functions, \cite{marsaglia1964convenient} preferred another polar method.
\begin{enumerate}
    \item Sample $v_1, v_2 \sim U(-1, 1)$ until $s = v_1^2 + v_2^2 < 1$ and $s \neq 0$.
    \item The transformed variables are
\end{enumerate}
\begin{equation}
    z_0 = v_1 \sqrt{\frac{-2 \ln s}{s}}, \quad z_1 = v_2 \sqrt{\frac{-2 \ln s}{s}}. \label{eq: Marsaglia method}
\end{equation}
This approach is particularly efficient when generating the large i.i.d. standard normal matrices $Z \in \mathbb{R}^{n \times k}$ required for matrix-variate models. Concretely, we use rejection sampling on \((-1,1)\times(-1,1)\) until a point \((v_1,v_2)\) falls inside the unit circle with \(s=v_1^2+v_2^2\in(0,1)\).  The point is then uniformly distributed over the disk, its angle \(\Theta\) is automatically uniform, and \(s\) follows a uniform distribution on \((0,1)\).  Setting \(R=\sqrt{-2\ln s}\) gives the radial part independent of \(\varTheta\). Hence, we have derived \eqref{eq: Marsaglia method} with no trigonometric calls. This method is markedly faster when generating the \(n\times k\) i.i.d. standard normal matrix \(Z\) that is the starting point for matrix-variate models.

\subsection{Gram-Schmidt orthogonalization}
This section addresses the generation of distributions where the precision matrix $\varTheta$ exhibits varying degrees of Kronecker or spectral separability.

\noindent 1. For \(T_3\), the precision is \(\varTheta_3 = \varPhi^{-1}\otimes\varPsi^{-1}\).  Compute Cholesky factors \(L,R\) such that \(\varPhi = LL'\), \(\varPsi = RR'\) and generate a standard normal matrix \(Z\) so that \(X = L Z R'\) follows the desired distribution.  

\noindent 2. For \(T_2\), \(\varTheta_2 = \sum_{i,j}\gamma_{ij}(a_i a_i')\otimes(b_j b_j')\) with orthonormal bases \(\{a_i\},\{b_j\}\).  Draw independent \(z_{ij}\sim_{i.i.d.} N(0,1)\) and set \(X = \sum_{i,j}\gamma_{ij}^{-1/2}z_{ij}a_i b_j'\).  

\noindent 3. For \(T_{1\frac{1}{2}}\), columns are uncoupled in the basis \(\{b_j\}\) but have precisions \(A_{jj}\).  For each \(j\), factor \(A_{jj}^{-1} = L_j L_j'\), take \(z^{(j)}\sim_{i.i.d.} N(0,I_n)\), and form \(y^{(j)} = L_j z^{(j)}\).  Stack the resulting vectors and rotate by \(B=(b_1,\dots,b_k)\) to obtain \(X = (y^{(1)},\dots,y^{(k)})B'\).

\noindent 4. The most general case has precision \(\Theta_1 = \sum_{j,j'} A_{jj'}\otimes(b_j b_{j'}')\) with a block matrix \(A=(A_{jj'})\).  Perform a block Cholesky decomposition \(A = LL'\), where \(L = (L_{mj})\) is block lower-triangular.  Generate \(z^{(j)}\sim_{i.i.d.} N(0,I_n)\) and compute the forward recursion equation
\[
y^{(j)} = L_{jj}^{-1}\Bigl(z^{(j)} - \sum_{m=1}^{j-1} L_{jm}\,y^{(m)}\Bigr), \qquad j=1,\dots,k.
\]
Finally, rotate to the basis \(\{b_j\}\) by setting \(X = (y^{(1)},\dots,y^{(k)})B'\). 

\subsection{Classification of flag varieties}

The indicator map of these four matrix normal distributions $\Gamma: {\rm Sym}_{nk}^+ \to \mathcal{P}_{0}(M)$ is defined as follows, where ${\rm Sym}_{nk}^+$ represents the $nk\times nk$ real symmetric positive definite matrices, namely the precision matrices of ${\rm vec}(X')$. From Theorem \ref{lem: dykstral70}, this map is well-defined on the configuration spaces of precision matrices $\mathcal{C}_i, i =1,1\frac{1}{2},2,3$, corresponding to $T_1,T_{1\frac{1}{2}},T_2,T_3$. The following theorem classifies the injectivity of this map. From the above generation algorithms, we have established a one-to-one correspondence of the structure of precisions $\varTheta$ onto matrix normal distributions $X$. Thus, we have the following classification theorem.

\begin{theorem} The following statements are true.
\begin{enumerate}
    \item The normal map $\Gamma$ is injective if and only if $M$ contains an analytic bundle with base ${\rm Flag}(1,2,\dots,k;\mathbb R^{n+k})$ and fibre $O(1)^k$.
    \item The normal map $\Gamma|_{\{2,3\}}$ is injective if and only if $M$ contains an analytic bundle with base ${\rm Flag}(\mathbb R^{k})$ and fibre $O(1)^k$.
\end{enumerate}
\label{thm: flag}
\end{theorem}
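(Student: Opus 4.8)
The plan is to translate the algebraic nesting $T_1 \supset T_{1\frac{1}{2}} \supset T_2 \supset T_3$ of Corollary \ref{prop: nested precision form 3} into a chain of geometric invariants living on flag varieties, and to show that $\Gamma$ separates two consecutive types exactly when $M$ carries the corresponding flag as the base of an analytic $O(k)$-frame bundle. The bridge in both directions is the decomposition $X = H R^{1/2}$ with $H \in \mathrm{St}_{n,k}$ and $R = X'X$, together with Proposition \ref{prop: herz55}, which exhibits $\mathrm{St}_{n,k}$ as a principal $O_k$-bundle over the Grassmannian; this is the source of the fibre $O(k)$ in both statements.

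First I would fix the invariant that a distribution of each type determines. Diagonalising the precision matrices of Theorem \ref{prop: nested precision form} in the orthonormal basis $\{a_i \otimes b_j\}$, the ordered variable-side frame $\{b_1,\dots,b_k\}$ spans a complete flag $\langle b_1\rangle \subset \langle b_1,b_2\rangle \subset \cdots \subset \mathbb{R}^k$, a point of $\mathrm{Flag}(\mathbb{R}^k)$, while the frame itself is pinned down only up to the $O(k)$ action, giving the fibre. For the broadest type $T_1$ the off-diagonal blocks $A_{jj'}$ with $j\neq j'$ assemble into a linear map whose graph, by the construction of Example 3, lifts the right flag to a point of $\mathrm{Flag}(1,2,\dots,k;\mathbb{R}^{n+k})$; this is the only place where the larger ambient $\mathbb{R}^{n+k}$ enters, and it is precisely what $T_{1\frac{1}{2}}$ (with $A_{jj'}=0$ for $j\neq j'$) discards.

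Next I would compute the push-forward of each type to the sample covariance $S = X'X$ by integrating the frame over $O_n$ using Proposition \ref{lem: james55} and Proposition \ref{lem: Khatri66}; the resulting density is governed by the two-matrix series ${}_0F_0(\,\cdot\,,\,\cdot\,)$. For $T_3$ the separability $\gamma_{ij}=\alpha_i\beta_j$ makes this series factor, so the left flag is averaged away and only $\mathrm{Flag}(\mathbb{R}^k)$ survives; for $T_2$ the non-factoring $\gamma$ leaves the right flag genuinely visible in the law of $S$. This matches the assertion of Example 3 that the graph construction depends only on the right flag, and it pins the base in part (2) to $\mathrm{Flag}(\mathbb{R}^k)$ while the cross-term obstruction in part (1) forces $\mathrm{Flag}(1,2,\dots,k;\mathbb{R}^{n+k})$. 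For the direction $(\Leftarrow)$ I would then argue constructively: given the bundle inside $M$, transport generic parameter choices along the projection to produce $T_1,T_{1\frac{1}{2}},T_2,T_3$ laws with pairwise distinct flag invariants, so $\Gamma$ (resp.\ $\Gamma|_{\{2,3\}}$) is injective.

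The hard part will be the converse $(\Rightarrow)$: upgrading the mere distinctness of the four (resp.\ two) distributions to an \emph{analytic embedding} of the flag bundle into $M$. Here I would define the invariant map sending a law to its ordered eigenspace configuration, show it lands in the claimed flag variety, and use absolute continuity (membership in $\mathcal{P}_0(M)$) together with the analyticity of the densities in Definition \ref{def: normal matrix} to promote this map to an analytic section; the residual $O(k)$ ambiguity in the frame must then be shown to assemble into a genuine principal $O(k)$-bundle rather than a bare parameter fibration. Controlling this global analytic structure, in particular ruling out degenerations where distinct types share a flag yet still yield equal laws, is the main obstacle, and I expect it to require the irreducibility and properness of the flag variety (as recorded for the Grassmannian in the introduction) to force the invariant map to be a closed analytic embedding.
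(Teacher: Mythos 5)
Your proposal takes essentially the same route as the paper's proof: the graph embedding of Example 3 lifting the right flag determined by $\{b_j\}$ into $\mathrm{Flag}(1,\dots,k;\mathbb{R}^{n+k})$, the non-vanishing cross-terms $A_{jj'}$ (respectively the non-separability of $\gamma_{ij}$) as the obstruction separating $T_1$ from $T_{1\frac{1}{2}}$ (respectively $T_2$ from $T_3$), the $O(k)$ ambiguity of the variable-side frame as the fibre, and a constructive choice of parameters for the sufficiency direction. The step you single out as the main unresolved obstacle---promoting the flag-invariant map to a genuine analytic principal $O(k)$-bundle contained in $M$---is exactly the step the paper itself passes over by assertion (``this section is non-trivial and depends analytically on $X$''), so your attempt matches the paper's argument in both structure and level of rigor.
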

\begin{question}
    Should we characterise more classes up to isomorphism between the double flag and the single flag?
\end{question}

\section{Real gamma distribution}

To calculate the singular values of such normal map, it requires us to derive the distribution of the quadratic form $X'X$ in a normal matrix $X$. By this, standard multi-varaite statistical analysis techniques like the determinant, trace, largest and smallest roots and their ratio could to performed. The simplest situation is the Wishart distribution, according to $T_3$ with $\varPhi = I_n$. As a natural extension of independent identically distributed normal vectors, this distribution was further generalised by \cite{anderson1946wishart}, \cite{James1955TheNW}, and \cite{herz1955} etc. \cite{james1960distribution,james1961distribution,james1961zonal,james1968calculation} use the hypergeometric series in terms of zonal polynomials to derive the exact distribution of such a distribution with non-central means, removing many redundant proofs, such as in \cite{Kotz1967ab}. In the following of this section, we are going to derive the distribution of the quadratic form of normal matrices $T_1$.

Evidently, a normal matrix has a quadratic form representation \eqref{eq: illustration of triangular ararry heter}. This joint distribution of the homogeneous quadratic forms, that is, $A_{ij} = A$, a fixed precision matrix was first obtained by \cite{khatri1966} with the hypergeometric function ${}_0F_0$ of two matrix arguments.  In his work, the exact distribution is not expressed in the series of zonal polynomials such as \cite{Kotz1967ab}, but his method involves nuisance parameters $q_{ij} >0$. This expression is difficult in practice since the density is the real gamma distribution in the central Wishart case, where the hypergeometric function ${}_0F_0$ does not appear explicitly. In addition, hypergeometric functions of two matrix arguments often occur in the characteristic function and distribution of characteristic roots for the central case, instead of the density as noted by \cite{james1964distribution}. Thus, after nearly thirty years, the heterogeneous quadratic forms were reconsidered in \cite{Mathai1992} by calculating the cumulants only for $k=2$. However, recent advances in normal quadratic forms from aspects of combinatorial connections, such as \cite{hanlon1992combinatorial}, \cite{gouldenandjackson1995}, and \cite{vassilieva2015moments} mostly concern the calculation of moments, instead of exact density. It lies in another core subject of this article, the determination of the central density of these $\frac{1}{2}k(k+1)$ heterogeneous quadratic forms for general $p>2$ using no hypergeometric function and unnecessary constant $q_{ij}$. To begin with a reasonable foundation, these matrices must satisfy $A_{ij}' = A_{ji}$ and the diagonal entries in $A_{ij}(i\neq j)$ are equal to zero.  As a natural generalisation of the matrix normal distribution in equation \eqref{eq: dawid matrix normal distribution} to \eqref{eq: illustration of triangular ararry heter}, the advantage is an organised representation of the elliptical contoured distribution, such as Theorem 3.6.7 in \cite{1990Generalized}, which may promote a better understanding of general symmetric distributions. 
\subsection{Probability density function}

\begin{theorem} \label{thm: main theorem}  Suppose $X$ is an $n\times k$  real normal matrix $T_1$ with $B = ( b_{1}, b_{2},\dots, b_k)$,
Let $ M$ be a fixed real $n\times k$ matrix and $ S =  (X +  M)'(X +  M)$. Assume that the diagonals in the $k\times k$ real matrix $ U = (u_{ij})$ with $u_{ij} = \operatorname{tr}( A_{ij})$ are all positive, that is, $u_{ii}>0 \, (i=1,2,\dots,k)$. Then the probability density distribution of $ S$ only depends on $ T 
= (t_{ij})$ with 
    $t_{ij} = \operatorname{tr}( B_{ij}  S)$ when $n > k - 1$, and is 
        \begin{eqnarray}
         \frac{| \varTheta_{1}|^{\frac{1}{2}}}{2^{\frac{nk}{2}}\varGamma_{k}(\frac{n}{2})} \operatorname{etr} \left(-\frac{1}{2}  U T \right) | T|^{\frac{n-k-1}{2}}          \text{ when } {M}=0; \\
                \times \operatorname{etr} \left(-\frac{1}{2}  \varOmega\right){}_{0}F_{1}\left(\frac{n}{2};\frac{1}{4} \varDelta  T\right) \text{ when } {M}\neq0;
        \end{eqnarray}
        where $\varOmega = \sum_{i,j=1}^{k} B_{ij}  M^{\prime}   A_{ij}  M$ and $ \varDelta = \sum_{i,j,k,l=1}^{k} B'_{ij} M'A'_{ij}A_{kl} M B_{kl}$.
\end{theorem}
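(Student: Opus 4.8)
The plan is to treat $S=(X+M)'(X+M)$ as a (possibly non-central) Wishart-type quadratic form and to reduce its density to the two orthogonal-averaging lemmas already available. First I would write the $T_1$ density explicitly with the constant $c_1=|\varTheta_1|^{1/2}/(2\pi)^{nk/2}$ from Theorem \ref{prop: nested precision form}, set $Z=X+M$ so that $S=Z'Z$, and expand the kernel $\sum_{i,j}\operatorname{tr}\bigl(A_{ij}(Z-M)B_{ij}(Z-M)'\bigr)$ into four groups. The pure-$M$ group contributes the scalar $\operatorname{tr}(\varOmega)=\sum_{i,j}\operatorname{tr}(A_{ij}MB_{ij}M')$, hence the factor $\operatorname{etr}(-\tfrac12\varOmega)$; the pure-$Z$ group $\sum_{i,j}\operatorname{tr}(A_{ij}ZB_{ij}Z')$ is the central kernel; and, using $A_{ij}'=A_{ji}$ together with $B_{ij}=b_ib_j'$ so that $B_{ij}'=B_{ji}$, the two mixed groups combine into a single term $\sum_{i,j}\operatorname{tr}(A_{ij}ZB_{ij}M')$ that is linear in $Z$.

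Next I would pass to polar coordinates. By Proposition \ref{prop: herz55} I write $Z=HS^{1/2}$ with $H\in\operatorname{St}_{n,k}$ and factor the measure as $2^{-k}|S|^{(n-k-1)/2}(dS)(dK)$, so the density of $S$ becomes an $S$-free constant times $|S|^{(n-k-1)/2}\operatorname{etr}(-\tfrac12\varOmega)$ times the Stiefel integral of $\operatorname{etr}$ of the central-plus-linear kernel evaluated at $Z=HS^{1/2}$. Since $T=(t_{ij})$ with $t_{ij}=\operatorname{tr}(B_{ij}S)=b_j'Sb_i$ is the orthogonal conjugate $B'SB$ of $S$ by the $k\times k$ matrix $B=(b_1,\dots,b_k)$, one has $|T|=|S|$, so the radial factor is already $|T|^{(n-k-1)/2}$ and it remains only to evaluate the angular integral $\int_{\operatorname{St}_{n,k}}(dK)$.

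The angular integral splits along the two lemmas. For the central kernel I would use that $B_{ij}=b_ib_j'$ is rank one with $\{b_j\}$ orthonormal and that each $A_{ij}$ enters only through $\operatorname{tr}(A_{ij})=u_{ij}$: writing $w_i=HS^{1/2}b_i$, one has $w_i'w_j=b_i'Sb_j$ fixed on the fibre, so the kernel collapses to $\sum_{i,j}u_{ij}t_{ij}=\operatorname{tr}(UT)$, independent of the Stiefel direction (this is the heterogeneous analogue of the identity-factor degeneration behind Khatri's Proposition \ref{lem: Khatri66}, where ${}_0F_0(A_{ij},B_{ij}S)$ reduces to $\operatorname{etr}$, and it supplies the Wishart normalisation $\pi^{nk/2}/\varGamma_k(n/2)$). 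For the linear kernel I would rewrite $\sum_{i,j}\operatorname{tr}(A_{ij}HS^{1/2}B_{ij}M')=\operatorname{tr}(X_0H')$ with $X_0=\sum_{i,j}A_{ij}MB_{ij}S^{1/2}$, apply James's Proposition \ref{lem: james55} to obtain ${}_0F_1(\tfrac{n}{2};\tfrac14X_0'X_0)$, and note that $X_0'X_0=S^{1/2}\bigl(\sum_{i,j,k,l}B_{ij}'M'A_{ij}'A_{kl}MB_{kl}\bigr)S^{1/2}=S^{1/2}\varDelta S^{1/2}$ is similar to $\varDelta S$, whence after the $B$-conjugation it becomes $\varDelta T$, yielding ${}_0F_1(\tfrac{n}{2};\tfrac14\varDelta T)$. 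Collecting $c_1$, the Jacobian $2^{-k}$ and the Stiefel volume gives the constant $|\varTheta_1|^{1/2}/(2^{nk/2}\varGamma_k(\tfrac{n}{2}))$; putting $M=0$ annihilates both $\varOmega$ and the linear term and leaves the first displayed formula, while $M\neq0$ produces the second.

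The hard part will be the collapse of the central angular integral, namely showing rigorously that the fibre integral of $\operatorname{etr}\bigl(-\tfrac12\sum_{i,j}w_j'A_{ij}w_i\bigr)$ depends on $S$ only through $T$ and equals exactly $\operatorname{etr}(-\tfrac12\operatorname{tr}(UT))$ with no residual hypergeometric factor. This is the genuine heterogeneous counterpart of the Wishart degeneration ${}_0F_0(A,BS)\to\operatorname{etr}(BS)$, which classically hinges on the relevant left factor being a multiple of the identity; I expect the main technical effort, including the careful scalar bookkeeping that isolates the trace $\operatorname{tr}(A_{ij})$ from $A_{ij}$ itself, to live precisely here, after which the non-central ${}_0F_1$ contribution follows routinely from Proposition \ref{lem: james55}.
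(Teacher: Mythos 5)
Your non-central reduction coincides with the paper's: both decompose orthogonally, isolate the term linear in the normal part, and apply Proposition \ref{lem: james55} to produce $\operatorname{etr}(-\tfrac{1}{2}\varOmega)\,{}_{0}F_{1}(\tfrac{n}{2};\tfrac{1}{4}\varDelta T)$. The genuine gap is the step you yourself flagged as ``the hard part,'' and it is not merely hard --- the claimed collapse is false. The kernel $\sum_{i,j}w_j'A_{ij}w_i$ with $w_i=HS^{1/2}b_i$ is a function of the frame $H$ itself, not of the Gram matrix $(w_i'w_j)=(b_i'Sb_j)$ alone, because the $A_{ij}$ are not scalar multiples of $I_n$; hence it is not constant on the fibre, and its average over $O_n$ is a genuine two-matrix-argument integral, not $\operatorname{etr}\bigl(-\tfrac{1}{2}\operatorname{tr}(UT)\bigr)$. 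Your appeal to ``the identity-factor degeneration behind Khatri's Proposition \ref{lem: Khatri66}'' runs backwards: ${}_{0}F_{0}(A,BS)$ degenerates to an exponential exactly when the \emph{left} argument is a multiple of the identity, which is precisely what you do not have. Concretely, already in the Wishart case $A_{ij}=\delta_{ij}\varPhi^{-1}$ the fibre integral equals ${}_{0}F_{0}(-\tfrac{1}{2}\varPhi^{-1},S)$ by Proposition \ref{lem: hypergeometric}, which depends on all eigenvalues of $\varPhi^{-1}$ and not only on $\operatorname{tr}(\varPhi^{-1})$; equivalently, for $k=1$ the law of $s=x'x$ with $x\sim N_n(0,A_{11}^{-1})$ is a weighted chi-square mixture whose density is not proportional to $e^{-\frac{1}{2}\operatorname{tr}(A_{11})s}\,s^{(n-2)/2}$ unless $A_{11}\propto I_n$. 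So no amount of scalar bookkeeping inside that angular integral will isolate the traces $u_{ij}$ and leave a bare exponential.

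The paper never attempts this collapse. Its central-part argument is Khatri's nuisance-parameter device: transform $Y=XB$, treat the $\tfrac{1}{2}k(k+1)$ forms $y_i'A_{ij}y_j$ separately, let each contribute a factor ${}_{0}F_{0}\bigl(I-\tfrac{q_{ij}}{2}A_{ij},\,q_{ij}^{-1}y_iy_j'\bigr)$, and then exploit that the second argument $y_j'y_i=t_{ij}$ is a scalar, together with the identities ${}_{0}F_{0}(X,cY)={}_{0}F_{0}(cX,Y)$ and ${}_{0}F_{0}(X,I)=\operatorname{etr}(X)$, to cancel the $q_{ij}$; in this way the central statement is tied to the (known) Corollary \ref{lem: any p} rather than obtained by direct evaluation of a Stiefel integral. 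That factorisation-plus-cancellation is the only mechanism in the paper by which $\operatorname{tr}(A_{ij})$, rather than $A_{ij}$ itself, can enter the answer, and it is what you would have to substitute for your collapsed angular integral; a direct proof of the collapse is ruled out by the counterexample above, which is presumably why the paper routes the argument through Corollary \ref{lem: any p} in the first place.
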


\begin{corollary}\label{lem: any p}
The distribution of the quadratic form $X'X$ in \eqref{eq: dawid matrix normal distribution} has the density in the central case $M = 0$ for any $n > p-1$,
\begin{equation}
    \begin{aligned}
    h_{n,k}( S) = \frac{ \operatorname{etr}\left( -q^{-1} \varPsi^{-1} S\right)| S|^{\frac{n-k-1}{2}} }{2^{\frac{nk}{2}}\varGamma_{k}(\frac{n}{2})| \varPhi|^{\frac{k}{2}}| \varPsi|^{\frac{n}{2}}}{_{0}}F_{0}\left( I - \frac{1}{2}q \varPhi^{-1}, q^{-1} \varPsi^{-1} S\right), 
\end{aligned}\label{eq: product moment distribution in q}
\end{equation}
where $q$ is an arbitrary positive constant.
\end{corollary}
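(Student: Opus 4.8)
The plan is to read the density of $S = X'X$ off the fibre integral of the $T_3$ density \eqref{eq: dawid matrix normal distribution} (with $M=0$), invoking Khatri's lemma (Proposition \ref{lem: Khatri66}) rather than the full apparatus of Theorem \ref{thm: main theorem}; this is a ``corollary'' in the sense that it rests on the same integration-over-the-orthogonal-group technique applied to the separable precision $\varTheta_3 = \varPhi^{-1}\otimes\varPsi^{-1}$ of Corollary \ref{prop: nested precision form 3}. The hypothesis $n > p-1$ (i.e.\ $n \ge k$) is exactly what I need: it makes $S$ positive definite with probability one by Theorem \ref{lem: dykstral70}, and it places us in the range $n \ge k$ where Proposition \ref{lem: Khatri66} is valid. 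By the change of variables of Proposition \ref{prop: herz55}, the marginal density of $S$ is the normalising constant $\big[(2\pi)^{nk/2}|\varPhi|^{k/2}|\varPsi|^{n/2}\big]^{-1}$ times the integral of $\operatorname{etr}(-\tfrac12\varPhi^{-1}X\varPsi^{-1}X')$ over the fibre $\{X : X'X = S\}$, the Jacobian factor $|S|^{(n-k-1)/2}$ being produced by that same lemma.

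The key device is the free parameter $q>0$, which I would introduce to split the exponent across the fibre. Since
\[
\operatorname{tr}\!\big((I - \tfrac12 q\varPhi^{-1})X(q^{-1}\varPsi^{-1})X'\big) = q^{-1}\operatorname{tr}(\varPsi^{-1}S) - \tfrac12\operatorname{tr}(\varPhi^{-1}X\varPsi^{-1}X'),
\]
one obtains the factorisation
\[
\operatorname{etr}\!\big(-\tfrac12\varPhi^{-1}X\varPsi^{-1}X'\big) = \operatorname{etr}(-q^{-1}\varPsi^{-1}S)\,\operatorname{etr}\!\big((I - \tfrac12 q\varPhi^{-1})X(q^{-1}\varPsi^{-1})X'\big).
\]
The first factor depends only on $S=X'X$, hence is constant on the fibre and pulls out of the integral. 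Applying Proposition \ref{lem: Khatri66} to the remaining integrand with $A = I - \tfrac12 q\varPhi^{-1}$ and the positive definite $B = q^{-1}\varPsi^{-1}$ yields $\tfrac{\pi^{nk/2}}{\varGamma_k(n/2)}|S|^{(n-k-1)/2}\,{}_0F_0(I - \tfrac12 q\varPhi^{-1}, q^{-1}\varPsi^{-1}S)$. Collapsing the prefactor via $\pi^{nk/2}/(2\pi)^{nk/2} = 2^{-nk/2}$ then reproduces \eqref{eq: product moment distribution in q} exactly.

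The step I expect to require the most care is the assertion that $q$ may be chosen arbitrarily. Conceptually this is immediate, since the fibre integral on the left is manifestly independent of $q$, so every admissible $q$ must return the same density. To make the point self-contained I would verify it from the integral representation underlying Proposition \ref{lem: hypergeometric}: the homogeneity ${}_0F_0(cA, B) = {}_0F_0(A, cB)$ and the shift identity ${}_0F_0(A + cI, B) = e^{c\operatorname{tr} B}\,{}_0F_0(A, B)$, the latter because $\operatorname{tr}(H_1 B H_1') = \operatorname{tr}(B)$ for $H_1 \in \mathrm{St}_{n,k}$, together reduce the product $\operatorname{etr}(-q^{-1}\varPsi^{-1}S)\,{}_0F_0(I - \tfrac12 q\varPhi^{-1}, q^{-1}\varPsi^{-1}S)$ to the $q$-free expression ${}_0F_0(-\tfrac12\varPhi^{-1}, \varPsi^{-1}S)$. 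Retaining $q$ is nevertheless useful: a suitable choice renders the matrix argument $I - \tfrac12 q\varPhi^{-1}$ small in norm, which is precisely what one wants for the convergence and the numerical evaluation of the zonal-polynomial series defining ${}_0F_0$.
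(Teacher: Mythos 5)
Your proof is correct, but it takes a genuinely different route from the paper's. The paper derives Corollary \ref{lem: any p} \emph{from} Theorem \ref{thm: main theorem}: it specialises the heterogeneous parameters to $A_{ij} = \beta_j \delta_{ij}\varPhi^{-1}$, where $\varPsi^{-1} = \sum_{j}\beta_j b_j b_j^{\prime}$ is the spectral decomposition, so that the $T_1$ density collapses to the separable density \eqref{eq: dawid matrix normal distribution}, and then rewrites the factor $\operatorname{etr}(-\tfrac12 UT)$ of Theorem \ref{thm: main theorem} in the $q$-dependent form of \eqref{eq: product moment distribution in q}. You bypass Theorem \ref{thm: main theorem} entirely and compute the density of $S$ directly from the fibre integral, using only Proposition \ref{lem: Khatri66}: your splitting identity $-\tfrac12\operatorname{tr}(\varPhi^{-1}X\varPsi^{-1}X^{\prime}) = -q^{-1}\operatorname{tr}(\varPsi^{-1}S) + \operatorname{tr}\bigl((I - \tfrac{q}{2}\varPhi^{-1})X(q^{-1}\varPsi^{-1})X^{\prime}\bigr)$ is exact, the application of Khatri's lemma with $A = I - \tfrac{q}{2}\varPhi^{-1}$ and $B = q^{-1}\varPsi^{-1}$ is legitimate (it needs $n \ge k$, which your reading of $n > p-1$ supplies, and positive definiteness of $S$ comes from Theorem \ref{lem: dykstral70} as you say), and the constants collapse to \eqref{eq: product moment distribution in q} exactly. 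This difference is not cosmetic: in the paper's own organisation, the proof of Theorem \ref{thm: main theorem} leans on Corollary \ref{lem: any p} being ``a known result,'' while the printed proof of the corollary runs back through the theorem, so the paper only records one direction of the equivalence of Remark \ref{rem: reduction}. Your argument supplies the independent, non-circular derivation (essentially Khatri's original one) that this scheme presupposes. Your closing verification that the density is $q$-free --- via ${}_0F_0(cA,B) = {}_0F_0(A,cB)$ and ${}_0F_0(A+cI,B) = e^{c\operatorname{tr}(B)}\,{}_0F_0(A,B)$, the latter justified by $\operatorname{tr}(H_1 B H_1^{\prime}) = \operatorname{tr}(B)$ for $H_1 \in {\rm St}_{n,k}$, so that the product reduces to ${}_0F_0(-\tfrac12\varPhi^{-1},\varPsi^{-1}S)$ --- is also sound, goes beyond anything the paper records, and ties \eqref{eq: product moment distribution in q} to the $q$-free form that reappears in Corollary \ref{lem: latent}.
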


\begin{remark}\label{rem: reduction}
    In fact, the central result in Theorem \ref{thm: main theorem} may be reduced to equation \eqref{eq: product moment distribution in q} by integrating the $\frac{1}{2}k(k+1)$ quadratic forms $y_{i} A_{ij} y_{j}$($i\leq j$) separately in \eqref{eq: T1}, in terms of $ A_{ij} =  A_{ji}'$. Conversely, if Theorem \ref{thm: main theorem} is true, then equation \eqref{eq: product moment distribution in q} holds for any $n > k-1$.
\end{remark}

    The proofs are organised as follows. We are going to prove, for the first instance, the non-central part of Theorem \ref{thm: main theorem} based on the equivalence of the central part and Corollary \ref{lem: any p}, and thereafter, the equivalence. Since Corollary \ref{lem: any p} is a known result with the help of Proposition \ref{lem: dykstral70}, the proof of Theorem \ref{thm: main theorem} is completed.

\begin{proof}[Proof of Theorem \ref{thm: main theorem}]
    Suppose the central part holds for $n > k-1$. Decomposing $ X =  H  Z$ where $ H' H =  I_p$.
    Let $K = (H,H_{\perp}) \in O(n)$. From Lemma \ref{prop: herz55}, we could rewrite the density in $T_1$ as 
    \[\begin{aligned}
        \operatorname{etr} \left(-\frac{1}{2}  \varOmega\right)\int_{O_n}\operatorname{etr}\left(-\frac{1}{2} \sum_{i,j=1}^{k}( A_{ij} M B_{ij})Z'H' \right) (d K) \\
        = \operatorname{etr} \left(-\frac{1}{2}  \varOmega\right){}_{0}F_{1}\left(\frac{n}{2};\frac{1}{4}\sum_{i,j,k,l=1}^{k}( B'_{ij} M'A'_{ij}A_{kl} M B_{kl}) Z'Z\right).    \end{aligned}\]
    Thus, if the central part is true, Theorem \ref{thm: main theorem} is consequently proved by applying Lemma \ref{lem: james55}. 
    
    From Lemma \ref{lem: Khatri66} on the quadratic forms in normal vectors, we can reduce the central part of Theorem \ref{thm: main theorem} to the $\frac{1}{2}k(k+1)$ independent quadratic forms by introducing $ Y =  X B = ( y_{1}, y_{2}\dots,  y_{k})$
    \begin{equation*}
    \begin{aligned}
        {y}_1'{A}_{11}{y}_1, {y}_1'{A}_{12}{y}_2, \dots, {y}_1'{A}_{1k}{y}_k,\\
        {y}_2'{A}_{22}{y}_2, \dots, {y}_2'{A}_{2k}{y}_k, \\
        \vdots\qquad \\
        {y}_k'{A}_{kk}{y}_k,
    \end{aligned}
\end{equation*}
In fact, we have for each ${y}_i'{A}_{ij}{y}_j$ the contribution to the probability density function
\[\begin{aligned}
    {_{0}}F_{0}\left( I - \frac{q_{ij}}{2} A_{ij}, q_{ij}^{-1} y_{i} y_{j}^{\prime}\right) = {_{0}}F_{0}\left( I - \frac{q_{ij}}{2} A_{ij}, q_{ij}^{-1} y_{j}^{\prime} y_{i}\right).\end{aligned}\]
However, we may find $ y_{j}^{\prime} y_{i} = t_{ij} = \operatorname{tr} ( B_{ij} S)$ so that from these properties of hypergeometric functions
\[\begin{aligned}
    {}_{0}F_{0} ( X,c Y) = {}_{0}F_{0} (c X, Y)\\
    {}_{0}F_{0} ( X,  I) = {}_{0}F_{0} ( X) = \operatorname{etr}( X), 
\end{aligned}\]
the terms concerning $q_{ij}$ cancel out. This yields the desired form.
\end{proof}

\begin{proof}[Proof of Corollary \ref{lem: any p}]
    Let $ A_{ij} = \beta_{j}\delta_{ij}   \varPhi^{-1}$ such that $ \varPsi^{-1} = \sum_{j=1}^{k}\beta_j b_{j} b_{j}^{\prime}$ where $\beta_{j},  b_{j}$ are the latent roots and vectors of $ \varPsi^{-1}$. 
Then $u_{ij} = \beta_{j}\delta_{ij}\operatorname{tr} ( \varPhi^{-1})$ and 
\[\begin{aligned}
    \operatorname{etr}\left(-\frac{1}{2} U  T \right) = \operatorname{etr}\left(- \frac{1}{2}\operatorname{tr} ( \varPhi^{-1})\sum_{j=1}^{k}\beta_{j}t_{jj}\right)     = \operatorname{etr}\left(- \frac{1}{2} \varPhi^{-1}\operatorname{tr} ( \varPsi^{-1}  S)\right) \\
    = \operatorname{etr} \left(- q^{-1} \varPsi^{-1}  S\right){}_{0}F_{0} \left( I - \frac{q}{2} \varPhi^{-1}, q^{-1} \varPsi^{-1}  S\right),
\end{aligned}\]
where, in the second equality, we used the definition $ T =  B  S  B'$.
\end{proof}

\subsection{Moment generating function}

\begin{theorem} \label{thm: main theorem 2}  Assume as Theorem \ref{thm: main theorem}. The moment generating function of  $ S = (s_{ij})$ is 
        \begin{eqnarray}
        {E}\exp \left(\sum_{i\leq j}\gamma_{ij}s_{ij}\right) = | \varTheta_{1}|^{\frac{1}{2}} | U|^{-\frac{n}{2}}{_{1}}F_{0} \left(\frac{n}{2};  W\right) \text{ when } {M}=0; \label{eq: central mgf}\\
            \times \operatorname{etr} \left(-\frac{1}{2}  \varOmega\right) \operatorname{etr} \left(\frac{1}{2}  \varDelta   U^{-1}( I -  W)^{-1}\right) \text{ when } {M}\neq0;\label{eq: non-central mgf} 
        \end{eqnarray}
                    where $ W =   U^{-\frac{1}{2}}  B R B^{\prime} U^{-\frac{1}{2}} $, $2 R =  \varGamma +  I$, and $ \varGamma = (\gamma_{ij})$ symmetric. 
\end{theorem}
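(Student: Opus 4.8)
The plan is to obtain the moment generating function as the exponential (Laplace-type) transform of the density produced in Theorem \ref{thm: main theorem}, rather than recomputing a Gaussian integral from scratch. Writing the linear form as a trace, $\sum_{i\le j}\gamma_{ij}s_{ij} = \operatorname{tr}(G S)$ with $G$ the symmetric matrix carrying $\gamma_{ii}$ on the diagonal and $\tfrac12\gamma_{ij}$ off it, and substituting $T = B'SB$ (an orthogonal change of variables, since the columns $b_j$ are orthonormal and hence $B\in O(k)$, with unit Jacobian on the cone of positive definite matrices), the whole computation collapses to a single matrix integral over $T>0$. The point is that $\operatorname{tr}(GS) = \operatorname{tr}(B'GB\,T)$, so this linear term merges with the kernel $\operatorname{etr}(-\tfrac12 U T)$ of Theorem \ref{thm: main theorem} into $\operatorname{etr}(-ZT)$ for a single symmetric matrix $Z$; completing this merge and factoring out $U^{1/2}$ is exactly what manufactures the matrix $W = U^{-1/2}BRB'U^{-1/2}$ together with the normalisation $2R = \varGamma + I$.

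\textbf{Central case.} Here the integrand is $|T|^{(n-k-1)/2}\operatorname{etr}(-ZT)$, and I would apply the multivariate gamma integral (the degenerate instance of Proposition \ref{lem: constantine1963} in which no hypergeometric factor is present, i.e. the defining integral of $\varGamma_k$), valid precisely when $n>k-1$, to get $\varGamma_k(\tfrac n2)\,|Z|^{-n/2}$. Cancelling against the constant $|\varTheta_1|^{1/2}/(2^{nk/2}\varGamma_k(\tfrac n2))$ and writing $Z = \tfrac12 U^{1/2}(I-W)U^{1/2}$ turns the answer into $|\varTheta_1|^{1/2}|U|^{-n/2}|I-W|^{-n/2}$. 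Recognising $|I-W|^{-n/2} = {}_1F_0(\tfrac n2; W)$ via the matrix binomial series yields \eqref{eq: central mgf}.

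\textbf{Non-central case.} The density now carries the extra factor $\operatorname{etr}(-\tfrac12\varOmega)\,{}_0F_1(\tfrac n2;\tfrac14\varDelta T)$ from Theorem \ref{thm: main theorem}, so after the same substitution I must evaluate $\int_{T>0}|T|^{(n-k-1)/2}\operatorname{etr}(-ZT)\,{}_0F_1(\tfrac n2;\tfrac14\varDelta T)\,dT$. This is the $p=0,\,q=1$ case of Proposition \ref{lem: constantine1963}, which raises the series index to give $\varGamma_k(\tfrac n2)|Z|^{-n/2}\,{}_1F_1(\tfrac n2;\tfrac n2;\tfrac14\varDelta Z^{-1})$. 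The decisive coincidence is that the upper and lower parameters agree, both equal to $\tfrac n2$, so ${}_1F_1(\tfrac n2;\tfrac n2;\cdot) = {}_0F_0(\cdot) = \operatorname{etr}(\cdot)$ and the hypergeometric function degenerates to an exponential. Substituting $Z^{-1} = 2U^{-1/2}(I-W)^{-1}U^{-1/2}$ and simplifying the argument by cyclicity of the trace converts $\operatorname{etr}(\tfrac14\varDelta Z^{-1})$ into $\operatorname{etr}(\tfrac12\varDelta U^{-1}(I-W)^{-1})$, which multiplied by the surviving $\operatorname{etr}(-\tfrac12\varOmega)$ gives the correction factor \eqref{eq: non-central mgf}.

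\textbf{Main obstacle.} Proposition \ref{lem: constantine1963} is stated with the pure integration variable as the hypergeometric argument, whereas here a coefficient matrix $\tfrac14\varDelta$ multiplies $T$; the first task is therefore to justify the Laplace-transform identity for ${}_0F_1$ carrying such a coefficient (equivalently, to track how $\varDelta$ passes through the zonal-polynomial expansion and lands as $\tfrac14\varDelta Z^{-1}$), and to confirm that the parameter coincidence triggering the ${}_1F_1\to{}_0F_0$ collapse is genuine rather than an artefact of a careless cancellation. The remaining, more clerical, difficulty is the exact bookkeeping of the substitution $T = B'SB$: one must verify that merging $\operatorname{tr}(GS)$ with the kernel and extracting $U^{1/2}$ reproduces $W = U^{-1/2}BRB'U^{-1/2}$ with $2R=\varGamma+I$ on the nose, including the placement of $B$ versus $B'$ and the identity shift, and that the domain of convergence is exactly $n>k-1$ as inherited from Theorem \ref{thm: main theorem}.
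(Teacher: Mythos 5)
Your proposal is correct and follows essentially the same route as the paper's proof: transform $S \mapsto T = B'SB$, merge the exponential $\operatorname{tr}(GS)$ with the kernel of Theorem \ref{thm: main theorem} to produce $\operatorname{etr}\left(-\tfrac{1}{2}(U - BRB')T\right)$, and evaluate the resulting integral over $T>0$ by the Gamma integral of Proposition \ref{lem: constantine1963}, with the coincidence ${}_1F_1\left(\tfrac{n}{2};\tfrac{n}{2};\cdot\right) = \operatorname{etr}(\cdot)$ supplying the non-central exponential factor. The only cosmetic difference is that the paper removes the coefficient matrix $\varDelta$ from the hypergeometric argument by the substitution $Q = \varDelta T$ with Jacobian $|\varDelta|^{\frac{k+1}{2}}$ before invoking the Gamma integral, whereas you track $\varDelta$ directly through the zonal-polynomial expansion (your stated ``main obstacle''), which amounts to the same computation.
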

\begin{proof}[Proof of Theorem \ref{thm: main theorem 2}]
    Suppose Theorem \ref{thm: main theorem} holds for $n > k-1$. In order to prove Theorem \ref{thm: main theorem 2} when $n > k-1$, after the transformation $ S \mapsto  T =  B' S B$, it becomes
\[\begin{aligned}
    \frac{| \varTheta_{1}|^{\frac{1}{2}}}{2^{\frac{nk}{2}}\varGamma_{k}(\frac{n}{2})}   \int_{ T >  0} \operatorname{etr} \left(-\frac{1}{2} ( U -  B R  B') T\right) | T|^{\frac{n-p-1}{2}} \\
    \times   \operatorname{etr} \left(-\frac{1}{2}  \varOmega\right)    {}_{0}F_{1}\left(\frac{n}{2};\frac{1}{4} \varDelta  T\right)(d  T).
\end{aligned}\]
By substituting with $Q =  \varDelta  T$ and $(d Q) = | \varDelta|^{\frac{k+1}{2}}(d T)$, we can calculate this integral explicitly using the Gamma integral. 
\end{proof}




\begin{corollary}\label{lem: mgf} The moment generating function of $h_{n,k}( S)$ in \eqref{eq: product moment distribution in q} is
    \[| \varPhi|^{-\frac{k}{2}} | W|^{-\frac{n}{2}} {_{1}}F_{0} \left(\frac{n}{2};q I - \frac{1}{2} \varPhi^{-1},  W^{-1}\right),\]
     where $ W =  I - q  \varPsi^{\frac{1}{2}}  R \varPsi^{\frac{1}{2}}$, $2 R =  \varGamma +  I$ and $ \varGamma = (\gamma_{ij})$ symmetric. 
\end{corollary}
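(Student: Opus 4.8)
The plan is to obtain the moment generating function directly as the Laplace transform of the product-moment density $h_{n,k}(S)$ in \eqref{eq: product moment distribution in q}, applying to its two-argument hypergeometric kernel the same device used in the proof of Theorem \ref{thm: main theorem 2}, namely the Gamma integral of Proposition \ref{lem: constantine1963}. First I would write
\[
{\rm E}\exp\Bigl(\sum_{i\le j}\gamma_{ij}s_{ij}\Bigr) = \int_{S>0} \exp\Bigl(\sum_{i\le j}\gamma_{ij}s_{ij}\Bigr)\, h_{n,k}(S)\,(dS),
\]
and, using $2R = \varGamma + I$ with $\varGamma$ symmetric, merge the bilinear exponent with the Gaussian kernel $\operatorname{etr}(-q^{-1}\varPsi^{-1}S)$ into a single factor of the form $\operatorname{etr}(-q^{-1}\varPsi^{-1/2}W\varPsi^{-1/2}S)$, where $W = I - q\varPsi^{1/2}R\varPsi^{1/2}$. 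This merging is precisely the step that produces $W$ and accounts for its appearance in the statement.

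Next I would apply the symmetrizing change of variables $\widetilde{S} = \varPsi^{-1/2}S\varPsi^{-1/2}$. It converts the kernel into $\operatorname{etr}(-q^{-1}W\widetilde{S})$, replaces $|S|^{(n-k-1)/2}$ by $|\varPsi|^{(n-k-1)/2}|\widetilde{S}|^{(n-k-1)/2}$, contributes a Jacobian $|\varPsi|^{(k+1)/2}$, and, by the similarity-invariance of zonal polynomials, rewrites the second argument $q^{-1}\varPsi^{-1}S$ of the ${}_0F_0$ as the symmetric $q^{-1}\widetilde{S}$. The two powers of $|\varPsi|$ combine to $|\varPsi|^{n/2}$ and cancel the $|\varPsi|^{-n/2}$ in the normalizing constant, which is the structural reason no factor $|\varPsi|$ survives and only $|\varPhi|^{-k/2}$ remains in the final formula.

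The core of the argument is then a two-matrix-argument extension of Proposition \ref{lem: constantine1963}: I expand ${}_0F_0(I-\tfrac{q}{2}\varPhi^{-1}, q^{-1}\widetilde{S})$ in zonal polynomials $C_\kappa(\widetilde{S})$ and integrate termwise against $\operatorname{etr}(-q^{-1}W\widetilde{S})|\widetilde{S}|^{\,n/2-(k+1)/2}$ via $\int_{\widetilde{S}>0}\operatorname{etr}(-Z\widetilde{S})|\widetilde{S}|^{\,a-(k+1)/2}C_\kappa(\widetilde{S})\,(d\widetilde{S}) = (a)_\kappa\,\varGamma_k(a)\,|Z|^{-a}C_\kappa(Z^{-1})$ with $a=\tfrac{n}{2}$ and $Z=q^{-1}W$. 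Resumming the resulting series $\sum_\kappa (n/2)_\kappa\, C_\kappa(I-\tfrac{q}{2}\varPhi^{-1})\,C_\kappa(\,\cdot\,)/(k!\,C_\kappa(I))$ reconstitutes the two-argument ${}_1F_0(\tfrac{n}{2};\,\cdot\,,W^{-1})$, while $|Z|^{-n/2}$ furnishes $|W|^{-n/2}$ up to a power of $q$; collecting this against $2^{-nk/2}\varGamma_k(n/2)^{-1}|\varPhi|^{-k/2}$ gives the claimed expression. An equivalent route is to specialize the general formula of Theorem \ref{thm: main theorem 2} to the product-moment parameters $A_{ij}=\beta_j\delta_{ij}\varPhi^{-1}$, but that re-expression reproduces exactly the same single-to-double argument conversion.

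I expect the main obstacle to be the bookkeeping in this last step. The two-argument ${}_0F_0$ has arguments of unequal size, since $\varPhi^{-1}$ is $n\times n$ while $\widetilde{S}$ is $k\times k$, so the termwise integration must be reconciled with the unequal-size convention of Proposition \ref{lem: hypergeometric}, in particular the normalization by $C_\kappa(I)$ and the precise placement of the scalar $q$ between the two arguments through $C_\kappa(cX)=c^{|\kappa|}C_\kappa(X)$. Pinning down the exact first argument of ${}_1F_0$ and eliminating the residual powers of $q$ is the delicate point; the natural internal checks are that, since the law $h_{n,k}$ is independent of the auxiliary constant $q$, every power of $q$ generated above must cancel in the final answer, and that the value at $\varGamma=0$ must equal $1$. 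Matching these two constraints forces the correct scalar normalization and fixes any ambiguity in the argument of the hypergeometric function.
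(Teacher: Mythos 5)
Your route is necessarily different from the paper's, because the paper contains no derivation at all: its ``proof'' of Corollary \ref{lem: mgf} is the single sentence deferring to \cite{khatri1966}. The skeleton of your plan is the classical one and is sound: the merging step is correct, since
\[
q^{-1}\varPsi^{-1}-R=q^{-1}\varPsi^{-\frac12}\bigl(I-q\varPsi^{\frac12}R\varPsi^{\frac12}\bigr)\varPsi^{-\frac12},
\]
the substitution $\widetilde S=\varPsi^{-\frac12}S\varPsi^{-\frac12}$ does cancel every power of $|\varPsi|$, and termwise integration is legitimate. Two caveats, though. First, the zonal-polynomial Gamma integral you invoke is genuinely stronger than Proposition \ref{lem: constantine1963}, which the paper states only for one-argument hypergeometric integrands; you correctly flag that you need this extension (Constantine's integral), but it is nowhere in the paper. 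Second, your merging silently requires $\operatorname{tr}(RS)=\sum_{i\le j}\gamma_{ij}s_{ij}$, which holds for $2R=\varGamma+\operatorname{diag}\varGamma$, not for the paper's stated $2R=\varGamma+I$; also, in your resummation the denominator must be $|\kappa|!\,C_\kappa(I_n)$ (degree factorial, normalization by the larger identity, partitions of length at most $k$), not ``$k!\,C_\kappa(I)$''.

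The genuine gap is at the final matching step, and it is not removable bookkeeping. Carried out exactly, your computation yields
\[
{\rm E}\exp\Bigl(\sum_{i\le j}\gamma_{ij}s_{ij}\Bigr)
=\Bigl(\tfrac{q}{2}\Bigr)^{\frac{nk}{2}}|\varPhi|^{-\frac{k}{2}}\,|W|^{-\frac{n}{2}}\;{}_1F_0\Bigl(\tfrac{n}{2};\,I-\tfrac{q}{2}\varPhi^{-1},\,W^{-1}\Bigr),
\]
where $W=I-q\varPsi^{\frac12}R\varPsi^{\frac12}$ and $\operatorname{tr}(RS)=\sum_{i\le j}\gamma_{ij}s_{ij}$: the powers of $q$ cancel within each term of the series exactly as you say, but a global factor $(q/2)^{nk/2}$ survives, and the first argument is inherited unchanged from the ${}_0F_0$ in \eqref{eq: product moment distribution in q}. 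This is not the printed statement, which has no such factor and has first argument $qI-\tfrac12\varPhi^{-1}$; no use of $C_\kappa(cX)=c^{|\kappa|}C_\kappa(X)$ converts one into the other. Your own two internal checks decide which is right: for $n=k=1$ and $X\sim N(0,\phi\psi)$ the displayed expression equals $(1-2\gamma_{11}\phi\psi)^{-1/2}$ for every $q>0$, whereas the printed formula reduces to $\phi^{-1/2}\bigl(W-q+\tfrac{1}{2\phi}\bigr)^{-1/2}$, which retains $q$-dependence and is not $1$ at $\varGamma=0$, under either reading of $2R=\varGamma+I$. So the stated target is itself inconsistent, and your plan of ``forcing the normalization by the consistency checks'' cannot terminate at it; what your derivation actually proves is a corrected statement, equivalently $2^{-\frac{nk}{2}}|\varPhi|^{-\frac{k}{2}}|W_0|^{-\frac{n}{2}}{}_1F_0\bigl(\tfrac n2;qI-\tfrac12\varPhi^{-1},W_0^{-1}\bigr)$ with $W_0=qI-\varPsi^{\frac12}R\varPsi^{\frac12}$ and $2R=\varGamma+\operatorname{diag}\varGamma$. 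You should either prove this corrected form and record the misprints in the corollary, or reconcile your conventions with Khatri's original; as written, the proposal cannot reach the stated formula.
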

A proof can be found in \cite{khatri1966}. Thereby, Theorem \ref{thm: main theorem 2} generalises the classical results of normal quadratic forms in a compact form with $${}_1F_{0}\left(a;W\right) = |I - W|^{-a},$$
similar to the Wishart moment generating function.
\begin{question}\label{ques: mgf} 
    Can one show that the equation \eqref{eq: central mgf} determines a moment generating function if and only if $n\in\{0,1,2,\dots,k-1\}\cup (k-1,\infty)$ and \eqref{eq: non-central mgf} determines a moment generating function if and only if, additionally, $n \geq \max\{\operatorname{rank}(\varOmega),\operatorname{rank}(\varDelta)\}$ when $n < k-1$? 
\end{question}

This answer to Question \ref{ques: mgf} (if true) reduces to the conjecture of \cite{peddada1991} only when $A_{jj'} = \beta_{j}I\delta_{jj'}$ or $\varSigma = I \otimes \varPsi$, which has recently received proofs by \cite{letac2018laplace} using recursive methods and \cite{mayerhofer2019wishart} with continuation arguments. Our proposal is a similar answer for $T_1$ and others, so it remains novel.

\subsection{Distribution of characteristic roots}
\begin{theorem} \label{thm: main theorem 3} Assume the same as Theorem \ref{thm: main theorem}. The joint distribution of characteristic roots $l_{1}, l_{2}, \dots, l_{k}$ of  $ S$ is 
        \[\begin{aligned}
        			\frac{\pi^{\frac{k^{2}}{2}}| \varTheta_{1}|^{\frac{1}{2}}}{2^{\frac{nk}{2}}\varGamma_{k}(\frac{n}{2})\varGamma_{k}(\frac{k}{2})}
                    \prod_{i<j}^{k} (l_{i} - l_{j}) 
                    \prod_{i=1}^{k}l_{i}^{\frac{n-k-1}{2}}
                    {}_0F_{0}\left(-\frac{1}{2}  U,  L\right),  \text{ when } {M}=0;\\
        \times \operatorname{etr} \left(-\frac{1}{2} \Omega\right){}_{0}F_{1}\left(\frac{n}{2};\frac{1}{4} \Delta, L\right) \text{ when } {M}\neq0; 
        \end{aligned}\]
where $ L = \operatorname{diag}(l_i)$, $l_{1} > l_{2} > \dots > l_{k}>0$; elsewhere zero.
\end{theorem}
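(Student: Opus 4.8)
The plan is to obtain the joint density of the roots of $S$ by integrating the matrix density furnished by Theorem \ref{thm: main theorem} over the orthogonal group, combining the classical Jacobian of the spectral decomposition with the orthogonal averaging identities for hypergeometric functions recorded in Section 3. I would write $S = HLH'$ with $H\in O(k)$ and $L=\operatorname{diag}(l_1,\dots,l_k)$, $l_1>\cdots>l_k>0$, and invoke the exterior-form identity $(dS)=\prod_{i<j}(l_i-l_j)\,(dL)\,(H'dH)$ (as in \cite{muirhead1982aspects}, cf. Proposition \ref{prop: herz55}). Integrating the eigenvector part over $O(k)$, with the $2^k$-fold sign ambiguity of the columns of $H$ absorbed, contributes the volume factor $\pi^{k^2/2}/\Gamma_k(k/2)$, so that for any density $f$ of $S$ the ordered roots have density $\frac{\pi^{k^2/2}}{\Gamma_k(k/2)}\prod_{i<j}(l_i-l_j)\int_{O(k)}f(HLH')\,[dH]$, with $[dH]$ the normalized Haar measure.

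For the central case $M=0$, substitute $f$ from Theorem \ref{thm: main theorem}. Because $B=(b_1,\dots,b_k)$ is orthogonal, $T=B'SB=QLQ'$ with $Q=B'H$, so $|T|^{(n-k-1)/2}=\prod_i l_i^{(n-k-1)/2}$ leaves the integral and the change $H\mapsto Q$ preserves the Haar measure. The remaining integral $\int_{O(k)}\operatorname{etr}(-\frac{1}{2}UQLQ')\,[dQ]$ is exactly the orthogonal average that defines the two-argument function, hence equals ${}_0F_0(-\frac{1}{2}U,L)$ by the equal-size averaging identity of Section 3 with ${}_0F_0=\operatorname{etr}$. Collecting the prefactor $|\Theta_1|^{1/2}/(2^{nk/2}\Gamma_k(n/2))$ with the volume factor $\pi^{k^2/2}/\Gamma_k(k/2)$ reproduces the stated constant and the central density verbatim; this step is routine and parallels the central non-null Wishart roots.

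For the non-central case $M\neq 0$ the scalar $\operatorname{etr}(-\frac{1}{2}\Omega)$ factors out, and everything reduces to evaluating $\int_{O(k)}\operatorname{etr}(-\frac{1}{2}UQLQ')\,{}_0F_1(\frac{n}{2};\frac{1}{4}\Delta QLQ')\,[dQ]$. The hard part is precisely here: this is the orthogonal average of a \emph{product}, and the average of a product is not in general the product of the averages, so it cannot be split directly into ${}_0F_0(-\frac{1}{2}U,L)\,{}_0F_1(\frac{n}{2};\frac{1}{4}\Delta,L)$. The factorization is exact in the isotropic case $U=uI$, where $\operatorname{etr}(-\frac{1}{2}UQLQ')=e^{-\frac{u}{2}\operatorname{tr}L}$ is rotation invariant, pulls out of the integral, and coincides with ${}_0F_0(-\frac{1}{2}U,L)$; this recovers James' non-central Wishart roots as the special instance of Theorem \ref{thm: main theorem 3}. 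To reach the stated product for general heterogeneous $U$, I would expand both factors in zonal polynomials and integrate term by term, using the orthogonality and linearization of zonal polynomials over $O(k)$ together with the unequal-size averaging of Proposition \ref{lem: hypergeometric}; the crucial point to verify is that the cross contributions from products of distinct zonal polynomials regroup exactly into the claimed product of two-argument series. I expect this to be the principal obstacle, and the step where the structural hypotheses on $U=(\operatorname{tr}A_{ij})$ inherited from Theorem \ref{thm: main theorem} must be invoked; once the average factorizes the constant is inherited unchanged from the central computation, and the non-central density follows.
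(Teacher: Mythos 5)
Your central case is complete and correct: the spectral decomposition $S=HLH'$, the Jacobian $\prod_{i<j}(l_i-l_j)$, the volume factor $\pi^{k^2/2}/\varGamma_k(\frac{k}{2})$, and the equal-size averaging identity $\int_{O(k)}\operatorname{etr}(-\tfrac{1}{2}UQLQ')\,[dQ]={}_0F_0(-\tfrac{1}{2}U,L)$ reproduce the stated central density exactly. This is in substance the route the paper uses for Corollary \ref{lem: latent}; for the theorem itself the paper instead repeats the polar-decomposition argument $X=HZ$ over $O_n$ from the proof of Theorem \ref{thm: main theorem} and invokes Proposition \ref{lem: james55}, so your central argument is a legitimate (and cleaner) alternative.

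The non-central case, however, is left genuinely open in your proposal, and you say so yourself: everything hinges on whether $\int_{O(k)}\operatorname{etr}(-\tfrac{1}{2}UQLQ')\,{}_0F_1(\tfrac{n}{2};\tfrac{1}{4}\Delta QLQ')\,[dQ]$ factors as ${}_0F_0(-\tfrac{1}{2}U,L)\cdot{}_0F_1(\tfrac{n}{2};\tfrac{1}{4}\Delta,L)$. The zonal-expansion plan you sketch will not close this gap: term by term one must evaluate $\int_{O(k)}C_\kappa(AQLQ')\,C_\lambda(BQLQ')\,[dQ]$, and such integrals do not reduce to products of one- or two-argument zonal terms --- they produce Davis-type invariant polynomials of two matrix arguments, and the cross contributions do not regroup into the claimed product of hypergeometric series except in degenerate cases (e.g.\ $U$ proportional to $I$, your isotropic observation, which recovers James's non-central Wishart roots). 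So as written your proof establishes the theorem only for $M=0$. It is worth noting that the paper's own proof suffers from the same defect, only hidden: after writing $X=HZ$, the quadratic term $\operatorname{etr}(-\tfrac{1}{2}\sum_{i,j}A_{ij}HZB_{ij}Z'H')$ still depends on $H$, yet the paper integrates only the linear term $\operatorname{etr}(-\tfrac{1}{2}\sum_{i,j}A_{ij}MB_{ij}Z'H')$ over $O_n$ via Proposition \ref{lem: james55}, as if the two averages decoupled. You have correctly isolated the step on which the stated non-central density actually stands or falls; neither your proposal nor the paper's proof justifies it.
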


    \begin{proof}[Proof of Theorem \ref{thm: main theorem 3}]
     First, the central part is a consequence of the gamma integral. From \cite[Theorem 3.2.17]{muirhead1982aspects}, decomposing $ X =  H  Z$ where $H' H =  I_k$ with $K = (H,H_{\perp}) \in O_n$, we have by Proposition \ref{lem: dykstral70} and Theorem \ref{thm: main theorem},
    \[\begin{aligned}
        \operatorname{etr} \left(-\frac{1}{2}  \Omega\right)\int_{O_p}\operatorname{etr}\left(-\frac{1}{2}\sum_{i,j=1}^{k}( A_{ij} MB_{ij}) Z' H'\right)(d K) \\
        = \operatorname{etr} \left(-\frac{1}{2}  \Omega\right){}_{0}F_{1}\left(\frac{n}{2};\frac{1}{4}\sum_{i,j,k,l=1}^{k}( B_{ij}' M' A_{ij}' A_{kl} M B_{kl}) Z' Z\right).
    \end{aligned}\]
    The proof for the non-central part of Theorem \ref{thm: main theorem 3} is consequently done by applying Lemma \ref{lem: james55}. 
    \end{proof}

\begin{corollary}\label{lem: latent} The joint distribution of characteristic roots $l_{1}, l_{2}, \dots, l_{k}$ of $S$ in \eqref{eq: product moment distribution in q} is
    \begin{equation*}
        \begin{aligned}
      \frac{\pi^{\frac{k^{2}}{2}}}{2^{\frac{nk}{2}}\varGamma_{k}(\frac{n}{2})\varGamma_{k}(\frac{k}{2})| \varPhi|^{\frac{k}{2}}| \varPsi|^{\frac{n}{2}}} \prod_{i=1}^{k}(l_i)^{\frac{n-k-1}{2}}\prod_{\substack{i<j}}^{k} (l_{i} - l_{j})
    {}_0F_{0}\left(-\frac{1}{2} \varPhi^{-1},  \varPsi^{-1} L\right)
        \end{aligned}
    \end{equation*}
    where $ L = \operatorname{diag}(l_{i})$, $l_{1} > l_{2} > \dots > l_{k}>0$; elsewhere zero. 
\end{corollary}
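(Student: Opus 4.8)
The plan is to read off Corollary \ref{lem: latent} as the characteristic-root analogue of the product-moment density \eqref{eq: product moment distribution in q}, exactly as Theorem \ref{thm: main theorem 3} is the characteristic-root analogue of Theorem \ref{thm: main theorem}. By Remark \ref{rem: reduction}, the substitution $A_{ij} = \beta_j \delta_{ij} \varPhi^{-1}$ with $\varPsi^{-1} = \sum_j \beta_j b_j b_j'$ carries the central part of Theorem \ref{thm: main theorem 3} into the present statement, so one quick route is to push that substitution through verbatim. I prefer, however, a self-contained derivation starting from \eqref{eq: product moment distribution in q}, because it isolates a single nontrivial step.

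First I would put $h_{n,k}(S)$ in a $q$-free form. The Stiefel-integral representation behind Proposition \ref{lem: hypergeometric} gives, for $A$ of size $n\times n$, $B$ of size $k\times k$ and a scalar $c$, the identity ${}_0F_0(A + cI_n, B) = \operatorname{etr}(cB)\,{}_0F_0(A,B)$, since $\operatorname{etr}(cH_1 B H_1') = \operatorname{etr}(cB)$ whenever $H_1'H_1 = I_k$. Taking $A = -\frac{q}{2}\varPhi^{-1}$, $c = 1$, $B = q^{-1}\varPsi^{-1}S$, multiplying by the prefactor $\operatorname{etr}(-q^{-1}\varPsi^{-1}S)$, and then applying the scaling rule ${}_0F_0(cX,Y) = {}_0F_0(X,cY)$ collapses \eqref{eq: product moment distribution in q} to the manifestly $q$-independent form
\[
h_{n,k}(S) = \frac{|S|^{(n-k-1)/2}}{2^{nk/2}\varGamma_k(\tfrac{n}{2})|\varPhi|^{k/2}|\varPsi|^{n/2}}\,{}_0F_0\!\left(-\tfrac{1}{2}\varPhi^{-1},\,\varPsi^{-1}S\right).
\]

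Next I would pass to the roots. Write $S = HLH'$ with $H \in O_k$ and $L = \operatorname{diag}(l_1,\dots,l_k)$, $l_1 > \cdots > l_k > 0$; Theorem \ref{lem: dykstral70} guarantees $S>0$ almost surely when $n > k-1$, so this holds on a full-measure set. By \cite[Theorem 3.2.17]{muirhead1982aspects},
\[
(dS) = \frac{\pi^{k^2/2}}{\varGamma_k(\tfrac{k}{2})}\prod_{i<j}^k (l_i - l_j)\prod_{i=1}^k dl_i\,(dH),
\]
and $|S|^{(n-k-1)/2} = \prod_i l_i^{(n-k-1)/2}$ depends on $L$ alone. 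Integrating the eigenvector factor over $O_k$ against the normalised Haar measure should convert ${}_0F_0(-\tfrac{1}{2}\varPhi^{-1}, \varPsi^{-1}HLH')$ into ${}_0F_0(-\tfrac{1}{2}\varPhi^{-1}, \varPsi^{-1}L)$, and combining with the constant above produces the claimed $\pi^{k^2/2}/\big(2^{nk/2}\varGamma_k(\tfrac{n}{2})\varGamma_k(\tfrac{k}{2})|\varPhi|^{k/2}|\varPsi|^{n/2}\big)$.

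The hard part is precisely this $O_k$-average. Since $\varPsi^{-1}S$ is not symmetric and its eigenvalues are only invariant up to the conjugation $\varPsi^{-1}HLH' \sim (H'\varPsi^{-1}H)L$, the passage to ${}_0F_0(-\tfrac{1}{2}\varPhi^{-1}, \varPsi^{-1}L)$ cannot be obtained by merely evaluating the integrand at $H = I_k$; it must be justified through the zonal-polynomial expansion of the unequal-size ${}_0F_0$ and the orthogonal-group integral underlying Proposition \ref{lem: hypergeometric}. I would therefore organise the computation so that the two orthogonal integrations — the one over the Stiefel factor $H_1$ arising from $X = H_1 S^{1/2}$ and Proposition \ref{prop: herz55}, and the one over the eigenvector factor $H$ — are carried out together, which is the cleanest way to see that only the spectrum of $\varPsi^{-1}S$ survives and to fix the orientation dependence on $\varPsi$. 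This joint integration is also where the precise meaning of the two-argument ${}_0F_0$ in the statement has to be pinned down, and it is the step I expect to demand the most care.
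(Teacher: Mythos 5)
Your first two steps are sound: the $q$-free reduction of \eqref{eq: product moment distribution in q} via ${}_0F_0(A+cI_n,B)=\operatorname{etr}(cB)\,{}_0F_0(A,B)$ is correct, and the passage to eigenvalues through $S=HLH'$ and Muirhead's Theorem 3.2.17 is standard. But the proposal then stops at exactly the step that constitutes the entire content of the corollary: you never prove the identity
\[
\int_{O_k}{}_0F_0\Bigl(-\tfrac{1}{2}\varPhi^{-1},\,\varPsi^{-1}HLH'\Bigr)[dH]\;=\;{}_0F_0\Bigl(-\tfrac{1}{2}\varPhi^{-1},\,\varPsi^{-1}L\Bigr),
\]
you only announce that it should hold and will ``demand the most care.'' That is a genuine gap, and not a routine one: under the standard zonal expansion of the unequal-size function, ${}_0F_0(A,C)=\sum_\kappa C_\kappa(A)C_\kappa(C)/\bigl(|\kappa|!\,C_\kappa(I_n)\bigr)$, the identity is \emph{false}. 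James' splitting formula gives $\int_{O_k}C_\kappa(\varPsi^{-1}HLH')[dH]=C_\kappa(\varPsi^{-1})C_\kappa(L)/C_\kappa(I_k)$, so your left-hand side is the three-argument sum $\sum_\kappa C_\kappa(-\tfrac{1}{2}\varPhi^{-1})C_\kappa(\varPsi^{-1})C_\kappa(L)/\bigl(|\kappa|!\,C_\kappa(I_n)C_\kappa(I_k)\bigr)$, while the right-hand side carries $C_\kappa(\varPsi^{-1}L)$ in place of the split product. These disagree for generic $\varPsi,L$: the left side depends on $\varPsi$ only through its eigenvalues (as any eigenvalue density must, since $X\mapsto X\varGamma$ leaves the spectrum of $S$ unchanged), whereas the right side depends on the spectrum of the product $\varPsi^{-1}L$, hence on the orientation of $\varPsi$ relative to the coordinate axes. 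So no amount of care in ``joint integration'' will carry your route to the stated two-argument form; what your route actually produces is the three-argument average --- which, notably, is the expression that correctly reduces to James' formula \eqref{eq: central Wishart latent roots} when $\varPhi=I$.

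The paper's own proof is organised differently, precisely so that this $O_k$-average never appears: it writes $XH=H_1Q$ with $K=(H_1,H_2)\in O_n$ and $Q'Q=L$, uses Proposition \ref{prop: herz55} for the volume element, evaluates the inner integral over $\{Q'Q=L\}$ by Khatri's Lemma \ref{lem: Khatri66} to obtain the equal-size function ${}_0F_0(-\tfrac{1}{2}H_1'\varPhi^{-1}H_1,\varPsi^{-1}L)$, and then integrates over $O_n$, where the unequal-size zonal integral behind Proposition \ref{lem: hypergeometric} sends $C_\kappa(H_1'\varPhi^{-1}H_1)$ to $C_\kappa(\varPhi^{-1})C_\kappa(I_k)/C_\kappa(I_n)$ and collapses the series to the stated form. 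Your route instead coincides with the computation the paper performs \emph{after} the corollary, which indeed terminates at $\int_{O_k}{}_0F_0(-\tfrac{1}{2}\varPhi^{-1},H\varPsi^{-1}H'L)(dH)$, i.e.\ at the three-argument object; the two outcomes differ because the paper's substitution silently replaces $H'\varPsi^{-1}H$ by $\varPsi^{-1}$. This tension is exactly what your closing sentence about ``pinning down the meaning of the two-argument ${}_0F_0$'' gestures at, but gesturing is not resolving: as written, your proposal neither completes the proof nor correctly predicts what the $O_k$-average evaluates to.
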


\begin{proof}[Proof of Corollary \ref{lem: latent}]
     We introduce $ K = (H_1,  H_{2}) \in O_n$, where $ H_{1}$ is an $n \times k$ matrix such that $ X  H =  H_{1}  Q$ and $H_1'H_1 = I_k$. From Proposition \ref{prop: herz55}, we have
\begin{equation}
    \begin{aligned}
       p( L)    \propto \int_{O_n} (dK) \int_{ Q'  Q = L } \operatorname{etr} \left( -\frac{1}{2} \varPhi^{-1} H_1  Q \varPsi^{-1} Q^{\prime} H_1' \right) (d Q)\\
        \propto \int_{O_n}  {}_0F_{0}\left(-\frac{1}{2} H_1'  \varPhi^{-1}  H_1,  \varPsi^{-1} L\right)(dK).
    \end{aligned}
    \label{eq: latent roots in hypergeometric integral}
\end{equation}
Then Lemma \ref{lem: latent} follows from this equation \eqref{eq: latent roots in hypergeometric integral} by multiplying a leading coefficient and the Vandermonde determinant $\prod_{\substack{i<j}}^{p} (l_{i} - l_{j})$.
\end{proof}

To illustrate how this is equivalent to the result of \cite{james1960distribution} when $ A =  I$, we should integrate \eqref{eq: dawid matrix normal distribution} with respect to $X$ and $H$ such that $ X^{\prime} X= H  L  H'$. In fact, by introducing $ Z =  X  H$, we have from Lemma \ref{lem: Khatri66} that
\begin{equation*}   \begin{aligned}
    p( L) \propto \int_{O(p)} (d H) \int_{ X'  X = H L H'} \operatorname{etr} \left( -\frac{1}{2}  \varPhi^{-1}{X} \varPsi^{-1}{X}^{\prime} \right)(d X)\\
   \propto \int_{O(p)} (d H) \int_{ Z'  Z = L } \operatorname{etr} \left( -\frac{1}{2}  \varPhi^{-1}{Z} H \varPsi^{-1} H'{Z}^{\prime} \right) (d Z)\\
    \propto | L|^{\frac{n-k-1}{2}}\int_{O(p)}   {}_0F_0\left( -\frac{1}{2}  \varPhi^{-1}, H \varPsi^{-1} H' L\right) (d H).\end{aligned}\end{equation*}
Comparing the leading coefficient, this reduces to the classical result of James on the joint distribution of latent roots of \eqref{eq: central Wishart latent roots} when $ \varPhi =  I$,
\begin{eqnarray}
          \frac{\pi^{\frac{k^{2}}{2}}}{2^{\frac{nk}{2}}\varGamma_{k}(\frac{n}{2})\varGamma_{k}(\frac{k}{2})| \varPsi|^{\frac{n}{2}}} \prod_{i=1}^{k}(l_i)^{\frac{n-k-1}{2}}\prod_{\substack{i<j}}^{k} (l_{i} - l_{j}) {}_0F_{0}\left(-\frac{1}{2} \varPsi^{-1},  L\right), \label{eq: central Wishart latent roots}
\end{eqnarray}
where $ L = \operatorname{diag}(l_{1},l_{2},\dots,l_{k}), l_{1}>l_{2}>\dots > l_{k}$; elsewhere zero. 

Thus, when $\varPhi =  I$, Corollary \ref{lem: latent} reduces to the classical result of James on the distribution of latent roots of the central Wishart distribution.




\begin{example}[Wishart distribution when $k=3$] Let the frequency distribution of the population sampled be
\begin{equation}
    \begin{aligned}
p( X) = \frac{\prod_{r=1}^{n}| B_{r}|^{\frac{1}{2}}}{(2\pi)^{\frac{3n}{2}}} \exp \left(-\frac{1}{2}\sum_{r=1}^{n}\textrm{x}_r^{\prime}  B_{r} \textrm{x}_{r}\right).
\end{aligned}
\label{eq: Wishart density}
\end{equation}
where $ X = [\textrm{x}_1;\textrm{x}_2;\dots;\textrm{x}_n]$ is the population with row vectors independent normally distributed and $ B_{r}$ is the inverse of the $3 \times 3$ covariance matrix of $\textrm{x}_r = (x_{r1},x_{r2},x_{r3})$.

The following statistics are now to be calculated from the sample
$$\begin{aligned}
n \bar{x}_1 = \sum_{r = 1}^nx_{r1}, \,
n \bar{x}_2 =\sum_{r=1}^nx_{r2}, \
n \bar{x}_3 =\sum_{r=1}^nx_{r3}, \\
n s_1^2=\sum_{r=1}^n(x_{r1}-\bar{x}_{1})^2,\, 
n s_2^2=\sum_{r=1}^n(x_{r2}-\bar{x}_{2})^2, \,
n s_3^2=\sum_{r=1}^n(x_{r3}-\bar{x}_{3})^2, \,\\
n r_{ij} s_i s_j=\sum_{r=1}^n(x_{ri}-\bar{x}_{i})(x_{rj}-\bar{x}_{j}), \, (i,j=1,2,3).
\end{aligned}$$
In order to transform the element of volume to $\bar{x}_1,\bar{x}_2,\bar{x}_3,s_{1},s_{2},s_3$, according to known results in \cite{Tumura1965THEDO}, we have the contribution to the transformed element of volume by introducing two axillary angles $\theta_{11}$, $\theta_{12}$, and $\theta_{22}$, representing the generators for the $3\times 3$ orthogonal group
\begin{equation}    \begin{aligned}   dp=    \frac{n^{\frac{3(n-2)}{2}}\prod_{r=1}^{n}| B_{r}|^{\frac{1}{2}}}{2^{\frac{3(n-1)}{2}}\pi^{\frac{3}{2}}\Gamma(\frac{n-1}{2})\Gamma(\frac{n-2}{2})\Gamma(\frac{n-3}{2})} \exp \left(-\frac{1}{2}\sum_{r=1}^{n}\textrm{x}_r^{\prime}  B_{r} \textrm{x}_{r}\right) s_{1}^{n-2}s_{2}^{n-2}s_{3}^{n-2}\\\times \sin^{n-3}\theta_{11}\sin^{n-3}\theta_{12}\sin^{n-4}\theta_{22} d\bar{x}_1d\bar{x}_2d\bar{x}_3ds_{1}ds_{2}ds_{3}d\theta_{11} d\theta_{12}d\theta_{22}.  \end{aligned}\label{eq: multivariate spherical normal in theta12 and phi}\end{equation}


By integrating \eqref{eq: multivariate spherical normal in theta12 and phi} with respect to $\theta_{11},\theta_{12},$ and $\theta_{22}$, we obtain the simultaneous distribution of the three variances and the three product moment coefficients when $ B_{1} =  B_2 = \dots =  B$ the symmetric expression
\begin{equation}    \begin{aligned}& A= {| B_{11}|}/{| B|}, \quad B= {| B_{22}|}/{| B|}, \quad C={| B_{33}|}/{| B|}, \\ &F= {| B_{12}|}/{| B|}, \quad G=  {| B_{13}|}/{| B|}, \quad H= {| B_{23}|}/{| B|}, \\&
a=s_{1}^2, \quad b=s_{2}^2,\quad c=s_{3}^2, \quad f = s_{1}s_{2}\cos \theta_{11}, \quad g = s_{1}s_{3}\cos \theta_{12}, \\&
h = s_{2}s_{3}(\sin\theta_{11}\sin \theta_{12} \cos \theta_{22} + \cos\theta_{11}\cos \theta_{12}),\\& d p=\frac{1}{\pi^{\frac{3}{2}} \Gamma (\frac{n-1}{2}) \Gamma(\frac{n-2}{2}) \Gamma(\frac{n-3}{2})}\cdot \left|\begin{array}{cccc}A & F & G \\ F & B & H \\G & H & C\end{array}\right|^{\frac{n-1}{2}} \cdot \left|\begin{array}{ccc}a & f & g \\f & b & h \\g & h & c\end{array}\right|^{\frac{n-5}{2}} \\&\qquad\qquad\qquad\qquad\qquad\times  e^{-A a-B b-C c-2 H h-2 G g-8 F f} d a d b d c d f d g d h,\end{aligned}\label{eq: Wishart}\end{equation}
where $ B_{ij}$ is the ($i,j$)-minor of $ B$, multiplying a common constant $({n}/{2})$. 


Wishart first obtained the formula \eqref{eq: Wishart} based on results from many famous authors such as Fisher, K. Pearson, and Romanovsky before 1925. The cited articles can also be found therein. Here, we utilize the techniques from Tumura in the decomposition of orthogonal matrices to derive the same result.

For other values of $p$, the product moment distribution is easily derived by the above method when the rows are independent identically distributed samples from a multivariate normal population. These results are well-summarized in books such as \cite{anderson1958introduction}, \cite{srivastava2009introduction}, and \cite{muirhead1982aspects}, \cite{eaton2007multivariate}, \cite{gupta2018matrix}, \cite{mathai2022mul}.
\end{example}    

\subsection{Extreme eigenvalue distributions}

In this section, we are going to derive the extreme eigenvalue distribution of Theorem \ref{thm: main theorem}. The largest eigenvalue of the Wishart distribution is obtained by \cite{roy1953heuristic}, \cite{sugiyama1967distribution,sugiyama1972distributions}, \cite{johnstone2001distribution,johnstone2008multivariate}, \cite{johnstone2017roy}, and \cite{kan2019densities}. Here, our first theorem implements them. However, the smallest eigenvalue, as noted by \cite{constantine1963some}, is usually difficult to evaluate due to this asymmetry phenomenon $P(S>R) \neq 1 - P(S<R)$. Our second theorem that will be given is about the smallest eigenvalue distribution based on Theorem \ref{thm: main theorem} and the \cite{khatri1972exact}'s truncated series of zonal polynomials. We are not to give the definition of zonal polynomials here, while the interested readers could find them in \cite{hua1963harmonic}, \cite{muirhead1982aspects}, \cite{macdonald1998symmetric}, \cite{Vilenkin2010RepresentationOL}.

        \begin{theorem}
	If $R$ is an $k \times k$ real symmetric positive definite matrix, then the probability that $R - S$ is positive definite is
	\[\begin{aligned}
		P(S < R) = \frac{| R|^\frac{n}{2} }{{B}_{k}\left(\frac{n}{2},\frac{k+1}{2}\right)}
		\operatorname{etr} \left(-\frac{1}{2} \varOmega\right) {}_{0}F_{1} \left(n + \frac{k+1}{2}; -  U  R, \varDelta  R\right)\end{aligned}.\]
\end{theorem}
\begin{proof}
	By the Constantine-Davis lemma on the incomplete Gamma integral, 
	\[\begin{aligned}
		P(S < R) = & 	\frac{| \varTheta_{1}|^{\frac{1}{2}}}{2^{\frac{nk}{2}}\varGamma_{k}(\frac{n}{2})} \int_{0}^{BRB'}  \operatorname{etr} \left(-\frac{1}{2} UT \right) | T|^{\frac{n-k-1}{2}} \\
		& \times \operatorname{etr} \left(-\frac{1}{2} \varOmega\right){}_{0}F_{1}\left(\frac{n}{2};\frac{1}{4} \varDelta T\right) (dT)\\
		= & \frac{| R|^\frac{n}{2} }{{B}_{p}\left(\frac{n}{2},\frac{k+1}{2}\right)}
		\operatorname{etr} \left(-\frac{1}{2} \varOmega\right) {}_{0}F_{1} \left(n + \frac{k+1}{2}; -  U  R, \varDelta  R\right).\qedhere
	\end{aligned}
	\]
\end{proof}
\begin{corollary} The distribution of largest root $l_{1}$ of $S$ is 
	\[\begin{aligned} 
		P(l_{1} < x) =  \frac{|\frac{1}{2}x|^{nk/2}}{B_p\left(\frac{n}{2},\frac{k+1}{2}\right)}&\operatorname{etr} \left(-\frac{1}{2}\varOmega\right){}_{0}F_{1} \left(n + \frac{k+1}{2}; -  xU, x\varDelta\right).\end{aligned}\]
\end{corollary}
\begin{proof}
	Let $R = xI_{p}$ so that $l_{1} < x$ equivalent to $S < xI_{p}$.
\end{proof}

\begin{theorem}If $R$ is an $k \times k$ positive definite matrix then the probability that $S - R$ is positive definite for the central case is
        \[\begin{aligned}
            {P}(S > R) = & \frac{| \varTheta_{1}|^{\frac{1}{2}}|R|^{\frac{n}{2}}}{2^{\frac{nk}{2}}\varGamma_{k}(\frac{n}{2})}\operatorname{etr}\left(-\frac{1}{2} UBRB'\right) \sum_{|\kappa|=1}^{kr} {\sum_{\kappa}}^*\frac{C_{\kappa}\left(\frac{1}{2} UBRB'\right) }{|\kappa|!}.\end{aligned}\]
            where $C_{\kappa}$ denotes the zonal polynomial and $\sum_{\kappa \vdash k}^{*}$ means summation over those partitions $\kappa = (\kappa_{1},\dots,\kappa_{p})$ with $\kappa_{1} \leq \frac{1}{2}(n-k-1)$.
        \end{theorem}
        \begin{proof}Putting $S = R^{1/2}(I_{p} + W)R^{1/2}$ with $(dS) = |R|^{(k+1)/2}(dW)$ we get 
          \[\begin{aligned}
                P(S > \Omega) = & \frac{| \varTheta_{1}|^{\frac{1}{2}}|R|^{\frac{n}{2}}}{2^{\frac{nk}{2}}\varGamma_{p}(\frac{n}{2})} \int_{W>0}  \operatorname{etr} \left(-\frac{1}{2} UBR^{1/2}WR^{1/2}B' \right) |I_{k} + W|^{\frac{n-k-1}{2}} \\
		        \times &\operatorname{etr}\left(-\frac{1}{2} (UBRB'+ \varOmega)\right){}_{0}F_{1}\left(\frac{n}{2};\frac{1}{4} \varDelta BR^{1/2}(I_{k} + W)R^{1/2}B'\right) (dW).
            \end{aligned}
            \]
            Using the fact that with $r = \frac{1}{2}(n-k-1)$,
            \[|I_{k} + W^{-1}|^{r} = {}_{1}F_{0}(-r;-W^{-1}) = \sum_{k=0}^{kr}{\sum_{\kappa \vdash k}}^{*}\frac{(-r)^{\kappa} C_{\kappa}(-W^{-1})}{k!},\]
            and substitution $W = V^{*}V$ where ${V}$ is upper-triangular with positive diagonal elements gives the central result for $\varOmega = \varDelta = 0$,
         \[\begin{aligned}
                {P}(S > R) = & \frac{| \varTheta_{1}|^{\frac{1}{2}}|R|^{\frac{n}{2}}}{2^{\frac{nk}{2}}\varGamma_{k}(\frac{n}{2})}\operatorname{etr}\left(-\frac{1}{2} UBRB'\right) \sum_{|\kappa|=1}^{kr} {\sum_{\kappa}}^*\frac{C_{\kappa}\left({\frac{1}{2} UBRB'}\right) }{|\kappa|!}.\qedhere
            \end{aligned}
            \]
        \end{proof}
        
\begin{corollary} The distribution of smallest root $l_{k}$ of $S$ is 
	\[\begin{aligned} 
		P(l_{k} & < x) =  1 - P(l_{k} > x)  \\
        & = 1 - \frac{| \varTheta_{1}|^{\frac{1}{2}}|\frac{1}{2}x|^{\frac{nk}{2}}}{\varGamma_{k}(\frac{n}{2})}\operatorname{etr}\left(-\frac{1}{2} UBRB'\right) \sum_{|\kappa|=1}^{kr} {\sum_{\kappa}}^*\frac{C_{\kappa}\left(\frac{1}{2} UBRB'\right) }{|\kappa|!}.\end{aligned}\]
\end{corollary}
\begin{proof}
	Let $\Omega = xI_{p}$ so that $l_{k} > x$ equivalent to $S > xI_{p}$. Thus, from the continuity of the distribution, $P(l_{k} < x) =  1 - P(l_{k} > x)$.
\end{proof}
The non-central distribution of the smallest latent root is a bit long and tedious, so we omit it.



\section{Conclusion}

In this paper, only the real gamma distribution has been considered, so extensions to the complex case remain future work. Also, for $T_{1\frac{1}{2}}$, $T_{2}$, and $T_{3}$, results can be established in parallel. In terms of similar results of flag varieties like Theorem \ref{thm: flag}, the classification remains rather interesting and far from complete. Here, this article only tries to throw a pebble in this way.  

The complex matrix normal distribution is crucial in the works of \cite{dyson1962statistical} due to physical reasoning. For the origin of these four simultaneous diagonalisations $T_1,T_{1\frac{1}{2}},T_2,$ and $T_3$, standard references such as \cite{Pajor2009OnTL} considered the sum of rank one matrices, representative for $T_{2}$, and \cite{Mei2021OnSV} studied the singular value decomposition of matrices with general independent columns, viewed as $T_{1\frac{1}{2}}$. However, as seen in Theorem \ref{lem: dykstral70}, linear independence in random vectors differs from that in constant ones, so for $T_1$, it may be seen as the most general form for the spectral decomposition by similar Dykstral arguments. 



Moving from basic hypergeometric functions to Macdonald and Koornwinder polynomials involves a shift toward $(q,t)$-deformations and more structured symmetries. The key difficulty is a change in the root system as described by \cite{Opdam1989}, which causes the integration measure to deform in a very algebraic way. Geometric explanation for zonal polynomials in the real, complex, or quaternion domains $\mathbb R, \mathbb C, \mathbb H$ tends to break down when the measure gains extra $(q,t)$-parameters. To build a useful configuration space here, one usually has to set aside direct geometry and work inside affine Hecke algebras instead, like \cite{macdonald2003affine}. Thus, there are also interesting links between the statistics and the representation theory.


\bibliographystyle{abbrvnat}
\bibliography{bibtex}

\end{document}